\documentclass[a4paper,10pt]{article}

%opening
\title{Actions of solvable Baumslag-Solitar groups on surfaces with (pseudo)-Anosov elements}
\author{Juan Alonso, Nancy Guelman and Juliana Xavier}
\date{}

\usepackage{psfrag}
\usepackage{fancyhdr}
\usepackage{mathrsfs}
\usepackage{verbatim}
\usepackage[ansinew]{inputenc}
\usepackage{amsmath,amsthm,amssymb,amscd}

\newcommand{\N}{\mathbb{N}}
\newcommand{\Z}{\mathbb{Z}}

\newcommand{\R}{\mathbb{R}}
\newcommand{\C}{\mathbb{C}}
\newcommand{\D}{\mathbb{D}}
\newcommand{\T}{\mathbb{T}}

\DeclareMathOperator{\fix}{Fix}
\DeclareMathOperator{\per}{Per}
\DeclareMathOperator{\homeo}{Homeo}
\DeclareMathOperator{\diff}{Diff}

\DeclareMathOperator{\id}{Id}
\DeclareMathOperator{\bs}{BS}

\newtheorem{teo}{Theorem}[section]
\newtheorem{cor}[teo]{Corollary}
\newtheorem{lema}[teo]{Lemma}
\newtheorem{prop}[teo]{Proposition}

\theoremstyle{definition}

\theoremstyle{remark}
\makeindex

\begin{document}

\maketitle

\begin{abstract} Let $BS(1,n)= \langle a,b : a b a ^{-1} = b ^n\rangle$ be the solvable Baumslag-Solitar group, where $n \geq  2$. We study representations of $BS(1, n)$ by homeomorphisms of
closed surfaces of genus $g\geq 1$ with (pseudo)-Anosov
elements.  That is, we consider a  closed surface $S$ of genus $g\geq 1$, and homeomorphisms $f, h: S \to S$ such that $h f h^{-1} = f^n$, for some $ n\geq 2$. It is known that $f$ (or some power of $f$) must be homotopic to the identity.  Suppose
that $h$ is (pseudo)-Anosov  with
stretch factor $\lambda >1$.  We show that $\langle f,h \rangle$ is not a faithful representation of $BS(1, n)$ if  $\lambda > n$.  We also show that there are no faithful representations of $BS(1, n)$ by torus homeomorphisms with $h$ an Anosov map and $f$ area preserving (regardless of the value of $\lambda$).

\end{abstract}

\begin{section}{Introduction}

Baumslag-Solitar groups $\bs(m,n)= \langle a,b : a b^m a ^{-1} = b ^n\rangle$, $m,n \in \Z$, were defined by Gilbert Baumslag and Donald Solitar in 1962 to provide examples of
non-Hopfian groups (see \cite{bsp}). These groups
 are examples of two-generator one-relator groups that play an important role in combinatorial group theory, geometric group theory, topology and dynamical systems  (see for example \cite{bv},
 \cite{fmo}). In particular, the groups $\bs(1,n)$, $n \geq  2$, are the simplest examples of infinite non
abelian solvable groups. Actions of solvable groups on one-manifolds have been studied by many authors (see, for example, \cite{plante} and \cite{navas}; specifically for $BS(1,n)$-actions see
\cite{glp1}). Results in dimension two appear in \cite{glp}; in particular  several examples of
$BS(1,n)$-actions on $\T ^2$ are exhibited.

% where several examples of
% $BS(1,n)$-actions on $\T ^2$ are exhibited. Besides, it is proven that if the set  of periodic points of $f$ is
% non-empty, then there exists a positive integer $N$ and a
% $BS$-minimal set $M$,  such that $M \subset \fix (f^N)$.
% Moreover, in the case that $S$ is a compact surface of non zero Euler characteristic and $f$ is isotopic to the identity,  a faithful $BS(1,n)$-action  $\langle f,h\rangle$ on  $S$ is not minimal.

The groups $\bs(1,n)$ also provide examples of distortion elements, which are related to Zimmer's conjecture and dynamical aspects in general (see \cite{fh} and \cite{zimmer}).
In particular, J. Franks and M.Handel proved that on a surface $S$ of genus greater than one, any distortion element in the group $\diff^1_0(S,area)$  is a torsion element, and therefore there
are not faithful representations of $\bs(1,n)$ in $\diff^1_0(S,area)$ (\cite{fh} ).  It is then natural to wonder what dynamics are allowed for Baumslag-Solitar group actions.

Throughout this paper $S$ will be a closed connected surface of genus $g\geq 1$, and $f, h: S \to S$ homeomorphisms satisfying the Baumslag-Solitar ($\bs$) equation $$h f h^{-1} = f^n,$$\noindent for some
$ n\geq 2$.  We will also assume that $f$ is isotopic to the identity, which  yields no loss of generality due to the following result:

\begin{prop} \label{gl} Let $ \langle f, h \rangle$ be an action of $\bs(1, n)$ on a closed surface $S $. If $n\geq 2$, there exists $k\geq 1$ such that $f^k$ is isotopic to the identity. Moreover, if the action is faithful on $\T^2$,  $f^
k$  has a lift whose rotation set
is the single point
$\{(0,0)\}$ and the set of $f^k$- fixed points is non-empty.
\end{prop}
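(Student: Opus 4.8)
\emph{Proof sketch.}
The plan is to prove the two claims separately. For the first --- that some power of $f$ is isotopic to the identity --- I would pass to the mapping class $[f]$, which by the $\bs$ relation is conjugate to $[f]^{n}$ in the mapping class group of $S$, and run the Thurston trichotomy. A pseudo-Anosov $[f]$ is impossible: the stretch factor is a conjugacy invariant, while that of $[f]^{n}$ is the $n$-th power of that of $[f]$ and $n\geq 2$. If $[f]$ is reducible of infinite order, I would use that its canonical reduction system $\mathcal{C}$ equals that of $[f]^{n}$ and is therefore preserved by $[h]$; following the $[h]$-orbit of a complementary piece excludes pseudo-Anosov first-return maps by the same stretch-factor obstruction, and then the Dehn-twist coefficients $m_{c}$ along the curves of $\mathcal{C}$ must satisfy $\sum_{c}|m_{c}| = n\sum_{c}|m_{c}|$, so they all vanish and $[f]$ has finite order. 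On $\T^{2}$ this is the elementary remark that a matrix in $\mathrm{GL}(2,\Z)$ conjugate to its $n$-th power has finite order: its eigenvalues are roots of unity by the orbit argument $\lambda\mapsto\lambda^{n}$, and $\begin{pmatrix}1&m\\0&1\end{pmatrix}$ is not conjugate in $\mathrm{GL}(2,\Z)$ to $\begin{pmatrix}1&nm\\0&1\end{pmatrix}$ for $n\geq 2$, $m\neq 0$. Let $k_{1}$ be the order of $[f]$; then $f^{k_{1}}\simeq\id$, and since $h f^{k_{1}}h^{-1}=(f^{k_{1}})^{n}$ I may replace $f$ by $f^{k_{1}}$ and assume from now on that $f\simeq\id$.

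For the torus refinement I would fix lifts $F$ of $f$ and $H$ of $h$ to $\R^{2}$. Since $f_{*}=\id$ on $H_{1}(\T^{2};\Z)$, $F$ commutes with the deck group $\Z^{2}$, while $H$ has linear part $A:=h_{*}\in\mathrm{GL}(2,\Z)$. As $HFH^{-1}$ and $F^{n}$ are both lifts of $f^{n}$, we have $HFH^{-1}=T_{v}\circ F^{n}$ for some $v\in\Z^{2}$. Passing to Misiurewicz--Ziemian rotation sets, only the linear part of $H$ survives under iteration, so $\rho(HFH^{-1})=A\cdot\rho(F)$, whereas $\rho(T_{v}\circ F^{n})=v+n\,\rho(F)$; hence $A\cdot\rho(F)=v+n\,\rho(F)$. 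Now $\rho(F)$ is compact and convex. Comparing areas ($|\det A|=1$ against the dilation factor $n^{2}$) forces $\rho(F)$ to have empty interior, and if it were a nondegenerate segment its direction vector would be an eigenvector of $A$ of eigenvalue $\pm n$, which is impossible because a matrix in $\mathrm{GL}(2,\Z)$ has no eigenvalue of absolute value $n\geq 2$ (the eigenvalues multiply to $\pm 1$ and the trace is an integer). Thus $\rho(F)=\{p\}$ and $(A-nI)p=v$; since $n$ is not an eigenvalue of $A$, $A-nI$ is invertible over $\Q$, so $p\in\Q^{2}$. Taking $q$ equal to the order of $p$ in $\R^{2}/\Z^{2}$, the lift $T_{-qp}\circ F^{q}$ of $f^{q}$ has rotation set $\{(0,0)\}$, and $k:=k_{1}q$ is then the exponent asked for in the statement.

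It remains to see that $\fix(f^{k})\neq\emptyset$; write $f$ for $f^{k}$ and $G$ for the lift with $\rho(G)=\{(0,0)\}$. Every $f$-invariant Borel probability measure has rotation vector in $\rho(G)=\{0\}$, so for an ergodic one $\mu$ the (well-defined, continuous) displacement map $\phi\colon\T^{2}\to\R^{2}$, $\phi(x)=G(\tilde x)-\tilde x$, satisfies $\int_{\T^{2}}\phi\,d\mu=0$. Atkinson's recurrence lemma for $\R^{2}$-valued cocycles over an ergodic system then yields a point $\tilde x$ and times $m_{i}\to\infty$ with $G^{m_{i}}(\tilde x)\to\tilde x$, so $\tilde x$ is non-wandering for $G$. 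As $f\simeq\id$, $G$ is an orientation-preserving homeomorphism of the plane, and by Brouwer's plane translation theorem such a map with a non-wandering point has a fixed point; projecting back to $\T^{2}$ gives $\fix(f^{k})\neq\emptyset$.

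I expect the main obstacle to lie in the reducible case of the first claim for genus $g\geq 2$, namely in controlling the twist part of a pure reducible mapping class that is conjugate to its own $n$-th power; on $\T^{2}$ the proof is elementary and its whole content is squeezed into the identity $A\cdot\rho(F)=v+n\,\rho(F)$, after which the rotation set is forced to be a rational point and the fixed point comes for free from Atkinson's lemma and Brouwer theory.
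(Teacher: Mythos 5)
Your route is genuinely different from the paper's, which proves this proposition entirely by citation: the finite-order statement is deduced from Farb--Lubotzky--Minsky (elements of infinite order in the mapping class group have linear growth, hence are undistorted, while $f$ is distorted in $\bs(1,n)$), and the whole torus statement is quoted from Theorem 3 of \cite{glp}. Your direct arguments for the first two claims are essentially sound: the Thurston-trichotomy treatment of $[f]\sim[f]^n$ (stretch factors for the pseudo-Anosov case and pieces, twist coefficients along the $[h]$-invariant canonical reduction system after passing to a pure power) amounts to reproving the special case of FLM that is needed, and the identity $A\cdot\rho(F)=v+n\,\rho(F)$ together with the fact that $\pm n$ is not an eigenvalue of a $\mathrm{GL}(2,\Z)$ matrix correctly forces $\rho(F)$ to be a single rational point, so that a power of $f$ admits an irrotational lift. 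This part is a nice self-contained alternative to the citations.

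There is, however, a genuine gap in the fixed-point step. You invoke ``Atkinson's recurrence lemma for $\R^{2}$-valued cocycles,'' but Atkinson's theorem is a one-dimensional result: a zero-mean integrable cocycle with values in $\R^{d}$ over an ergodic system need not be recurrent when $d\geq 2$. What the one-dimensional statement gives, applied to $\langle\phi,w\rangle$ for a fixed direction $w$, is $\liminf_{m}|\langle S_{m}\phi(x),w\rangle|=0$ for almost every $x$ --- but the subsequences realizing this for two independent directions need not coincide, so you do not obtain times $m_{i}$ with $G^{m_{i}}(\tilde x)\to\tilde x$, and hence no non-wandering point of the lift to feed into Brouwer's translation theorem. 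This is precisely the delicate point of the torus statement: producing a fixed point from $\rho(G)=\{(0,0)\}$ for a homeomorphism that is not assumed conservative requires a Franks-type realization theorem for rational points of degenerate rotation sets (or further use of the group relation and faithfulness, as in \cite{glp}), not a soft recurrence argument. Either cite such a realization result explicitly or, as the paper does, quote Theorem 3 of \cite{glp} for this part.
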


This is  consequence of Theorem 1.2 of \cite{flm} that claims that every element of infinite order in the mapping class group has linear growth. So, there are no distortion elements of
infinite order in the mapping class group. Since $f$ is a distortion element of $BS(1,n)$, it follows that $[f]$ has finite order. The statement on faithful actions on $\T^2$ is the content of
Theorem 3 in \cite{glp}.

\bigskip

If the action is faithful, the group $\langle f^k, h \rangle$ is isomorphic to $BS(1, n)$.  Then, the previous theorem allows us to restrict
our study to the case where $f$ is isotopic to identity.  Moreover, if $S= \T ^2$ we may assume that $f$ has a lift $\tilde f$ to the universal
covering space such that the rotation set is $\{(0,0)\}$ and that $\fix(f)\neq \emptyset$.  We say that $\tilde f$ is the {\it irrotational lift} for $f$.\\

Once the homotopy class of $f$ has been stablished, we seek to understand the possibilities for the homotopy class of the conjugating generator $h$.  We  exhibit examples of faithful
 actions $ \langle f, h \rangle$ of $BS(1, n)$ where $h$ is isotopic to a (pseudo)-Anosov map (see Section \ref{ex}). However, these examples are somewhat trivial, in the sense that the action
 is supported in an invariant subset which is collapsed by a semiconjugacy taking $h$ to its pseudo-Anosov representative (the construction is detailed in Section \ref{ex}).  This led us
  to consider the case where $h$ is a  (pseudo)-Anosov homeomorphism (and not just isotopic to one). It is known that the centralizer of a (pseudo)-Anosov map is virtually cyclic and so there are
  no faithful actions of
$\Z  ^2$ on surfaces with pseudo-Anosov elements (see \cite{mccarthy}, \cite{rocha} and also \cite{py}). The present work is a natural generalization of this kind of results.  In the more general
scope, our work lies in the context of trying
to understand the nature of the obstructions to the existence of (faithful)
group actions on certain phase spaces (a survey of these ideas can be found in  \cite{fisher}).  Our first result is the following:

\begin{teo}\label{teo1} Let  $ \langle f, h \rangle$ be an action of $BS(1, n)$ on a closed surface $S$, where $f$ is isotopic to the identity, and $h$ is a  (pseudo)-Anosov homeomorphism with stretch factor $\lambda > n$. Then  $f= \id$.

\end{teo}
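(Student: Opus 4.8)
The plan is to pass to the universal cover $\tilde S$ and exploit the affine structure of $h$ directly. First I would fix a lift $\tilde f$ of $f$ and a lift $\tilde h$ of $h$ for which the relation holds on the nose, $\tilde h\tilde f\tilde h^{-1}=\tilde f^{n}$ (whence $\tilde h^{k}\tilde f\tilde h^{-k}=\tilde f^{n^{k}}$ for all $k\ge 1$). If $g\ge 2$ this is automatic: $\pi_{1}(S)$ is centerless, so the isotopy from $\id$ to $f$ lifts to one from $\id$ to a lift $\tilde f$ that commutes with every deck transformation, and $\tilde h\tilde f\tilde h^{-1}$ and $\tilde f^{n}$ are then both the unique deck-commuting lift of $f^{n}$. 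If $g=1$ (so $h$ is Anosov) I would instead take $\tilde f$ to be the irrotational lift of Proposition~\ref{gl}; since it has a fixed point so does $\tilde h\tilde f\tilde h^{-1}$, its rotation set is therefore $\{(0,0)\}$, and comparison with $\tilde f^{n}$ forces equality.

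\textbf{The flat structure and the key estimate.} Since $h$ is (pseudo)-Anosov with stretch factor $\lambda$, it preserves a singular flat metric $d$ and a pair of transverse measured foliations, acting affinely so as to contract the stable leaves uniformly by $\lambda^{-1}$ while expanding the transverse measure $\mu^{s}$ of the stable foliation by $\lambda$. Lift this to $\tilde S$: the stable leaf space $T^{s}$ is an $\R$-tree metrized by $\mu^{s}$, the projection $\pi\colon\tilde S\to T^{s}$ is $1$-Lipschitz for $d$, and $\tilde h$ induces $\bar h$ on $T^{s}$ with $\pi\tilde h=\bar h\pi$ and $d_{T^{s}}(\bar h\xi,\bar h\eta)=\lambda\,d_{T^{s}}(\xi,\eta)$. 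Equivariance of $\tilde f$ and $\pi$ makes $x\mapsto d_{T^{s}}(\pi\tilde f x,\pi x)$ descend to $S$, so $\Delta:=\sup_{x}d_{T^{s}}(\pi\tilde f x,\pi x)\le\sup_{x}d(\tilde f x,x)=:D<\infty$, and $d_{T^{s}}(\pi\tilde f^{m}x,\pi x)\le m\Delta$. Writing $\tilde f=\tilde h^{-k}\tilde f^{n^{k}}\tilde h^{k}$ and applying $\pi$, the equivariance and the $\lambda$-scaling of $\bar h$ give
\[
d_{T^{s}}\!\big(\pi\tilde f x,\pi x\big)=\lambda^{-k}\,d_{T^{s}}\!\big(\pi(\tilde f^{n^{k}}\tilde h^{k}x),\,\pi(\tilde h^{k}x)\big)\le \lambda^{-k}n^{k}\Delta=(n/\lambda)^{k}\Delta .
\]
Since $\lambda>n$ the right-hand side tends to $0$ as $k\to\infty$, so $\pi\circ\tilde f=\pi$: $\tilde f$ maps each stable leaf onto itself. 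The point of projecting to $T^{s}$ is that a displacement bound in $d$ itself only yields the useless $d(\tilde f^{n^{k}}x,x)\le\lambda^{k}D$; on $T^{s}$ the conjugation by $\tilde h^{-1}$ contracts, converting $\lambda>n$ into a convergent geometric series.

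\textbf{Conclusion.} On a regular stable leaf $\ell$, parametrized by arc length as $\R$, $\phi_{\ell}:=\tilde f|_{\ell}$ is an increasing homeomorphism with $\sup|\phi_{\ell}-\id|\le D$. Since $\tilde h$ takes stable leaves to stable leaves scaling arc length by exactly $\lambda^{-1}$, the identity $\tilde h^{k}\tilde f\tilde h^{-k}=\tilde f^{n^{k}}$ shows that the displacement of $\phi_{\ell}^{(n^{k})}=(\tilde f^{n^{k}})|_{\ell}$ at a point of $\ell$ equals $\lambda^{-k}$ times the displacement of $\tilde f$ at the corresponding point of $\tilde h^{-k}\ell$, so $\sup|\phi_{\ell}^{(n^{k})}-\id|\le\lambda^{-k}D\to 0$. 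But monotonicity of $\phi_{\ell}$ gives $|\phi_{\ell}^{(m)}(s)-s|\ge|\phi_{\ell}(s)-s|$ for all $s$ and all $m\ge 1$, so $\phi_{\ell}=\id$. As regular leaves are dense in $\tilde S$, we conclude $\tilde f=\id$, hence $f=\id$.

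\textbf{Main obstacle.} I expect the delicate points to be, first, the lift bookkeeping of the first step: ensuring the relation holds exactly on $\tilde S$ is where the case $g=1$ versus $g\ge 2$ matters, and where the hypothesis that $\langle f,h\rangle$ really is $BS(1,n)$ is used (through Proposition~\ref{gl}) when $g=1$. Second, in the last two steps one must deal with the prong singularities of the flat metric, which make $T^{s}$ branch and turn the singular leaves into finite trees rather than lines; this is handled by noting that $\tilde f$ fixes every singularity — the unique point of its valence on its singular leaf — and scales each separatrix by $\lambda^{-1}$, so the argument on the dense set of regular leaves suffices.
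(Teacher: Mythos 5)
Your proposal is correct in substance, and while its first half coincides with the paper's argument, its conclusion takes a genuinely different and arguably more elementary route. The estimate $d_{T^s}(\pi\tilde f x,\pi x)\le (n/\lambda)^k\Delta$ is exactly the paper's Lemma \ref{nus0} (there phrased as $\nu^s(\gamma_y)\le (n/\lambda)^mC$ using subadditivity of the transverse measure and boundedness of $y\mapsto\nu^s(\gamma_y)$), and the deduction that $f$ preserves each stable leaf is their Lemma \ref{fixFs}. Where you diverge is the endgame. The paper passes to the orientation double cover of ${\cal F}^s$, lifts the action there, defines the signed displacement $\varphi_u(x)=\hat\nu^u(\gamma_x)$, proves $\varphi_u$ has constant sign by a connectedness argument (iterated unstable segments become dense, and $\fix(f)=\varphi_u^{-1}(0)$ is $h^{-1}$-invariant), and then kills the positive-term series $\sum_k\varphi_u(f^k(x))=\lim_m\lambda^{-m}\varphi_u(h^{-m}x)=0$. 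You instead work leaf by leaf in the universal cover: $\tilde f$ restricted to a stable leaf $\ell\cong\R$ is an increasing homeomorphism, iterates of such a map have non-decreasing displacement, and the relation forces the displacement of $\tilde f^{n^k}$ on $\ell$ to be $O(\lambda^{-k})$, whence $\tilde f|_\ell=\id$. This replaces the global sign lemma and the orientation cover entirely — the orientation of a single line in $\tilde S$ is all you need — which is a real simplification.

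Two points in your conclusion deserve explicit justification. First, that $\phi_\ell$ is \emph{increasing}: this is not free, but it follows from bounded displacement, since a decreasing homeomorphism of $\R$ has a fixed point $s_0$ and satisfies $|\phi_\ell(s)-s|\ge|s-s_0|\to\infty$. Second, your bound $\sup|\phi_\ell-\id|\le D$ tacitly uses that leaf segments realize the flat distance; this holds because the singular flat metric on $\tilde S$ is CAT(0) (cone angles are $\ge 3\pi$) and leaf arcs are local geodesics, but it is cleaner to bound the leaf displacement by the continuous (hence bounded) function $x\mapsto\nu^u(\gamma_x)$, as the leaf segment from $x$ to $f(x)$ is the $\nu^u$-efficient representative of $\gamma_x$ rel endpoints. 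With these two remarks filled in, your argument is complete.
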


It follows that there are no faithful representations of $BS(1, n)$ by surface homeomorphisms with $h$ a (pseudo)- Anosov map with stretch factor $\lambda > n$.

As was already pointed out, the last theorem is false if we  ask only that $h$ is {\it isotopic} to a (pseudo)-Anosov map as the examples in Section \ref{ex} show.  Moreover, an example of
G. Mess \cite{fh} shows that the theorem is no longer true with a slight generalization on the acting group: if we set $h$ to be a linear Anosov map on   $\T^2$
with eigenvalue $\lambda$ and
$f(x) = x+w$, where $w \neq 0$ is an  unstable eigenvector of $h$, then $hfh^{-1} (x) = x +\lambda w$.

When the surface is a torus and $f$ is area preserving, we are able to remove the hypothesis $\lambda > n$:

\begin{teo}\label{toro} Let  $ \langle f, h \rangle$ be an action of $BS(1, n)$ on  $\T^2$, where $f$ is an area-preserving homeomorphism isotopic to the identity and $h$ an Anosov homeomorphism. Then, $f= id$.

\end{teo}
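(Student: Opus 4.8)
The plan is to put $h$ into a linear model and then, in the spirit of the proof of Theorem~\ref{teo1}, extract from area preservation an invariant measure and a recurrence statement strong enough to dispense with the hypothesis $\lambda>n$.

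By Proposition~\ref{gl} we may assume $f$ is isotopic to the identity, and we fix the irrotational lift $\tilde f$, so $\rho(\tilde f)=\{(0,0)\}$ and $\fix(f)\neq\emptyset$. Since $h$ is an Anosov homeomorphism of $\T^2$, it is topologically conjugate to a linear hyperbolic automorphism $A\in GL(2,\Z)$, whose eigenvalues have moduli $\lambda>1>\lambda^{-1}$ along eigendirections $e^u,e^s$. Conjugating the whole action by such a conjugacy $\phi$, we may assume $h=A$; the cost is that $f$ becomes $F=\phi f\phi^{-1}$, which need not preserve Lebesgue measure but preserves the Borel probability measure $\mu=\phi_*(\mathrm{Leb})$, a non-atomic measure of full support. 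Since conjugation of a lift by the linear map $A$ transforms rotation sets linearly and $A\{(0,0)\}=\{(0,0)\}$, choosing lifts compatibly we get $\rho(\tilde F)=\{(0,0)\}$, $\fix(\tilde F)\neq\emptyset$, and the Baumslag--Solitar relation lifts exactly: $A\,\tilde F\,A^{-1}=\tilde F^{\,n}$.

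Write $u=\tilde F-\mathrm{Id}$, a bounded $\Z^2$-periodic continuous map, and decompose $u=\psi^s e^s+\psi^u e^u$. Iterating the lifted relation gives $A^k u A^{-k}=\tilde F^{\,n^k}-\mathrm{Id}$, i.e.
\[
\tilde F^{\,n^k}(x)-x = A^k\!\big(u(A^{-k}x)\big),\qquad k\ge 1 .
\]
Thus the stable part of $\tilde F^{\,n^k}(x)-x$ is $\lambda^{-k}$ times a bounded quantity, hence tends to $0$ uniformly in $x$, while the unstable part is $\lambda^k\,\psi^u(A^{-k}x)$. On the other hand $\rho(\tilde F)=\{(0,0)\}$ forces $\sup_x\|\tilde F^{m}(x)-x\|=o(m)$, which (taking $m=n^k$) already recovers $F=\mathrm{Id}$ when $\lambda\ge n$ but yields nothing when $\lambda<n$; this is exactly where area preservation must enter. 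The rotation vector of $F$ with respect to any $F$-invariant probability measure lies in $\rho(\tilde F)=\{(0,0)\}$; applied to $\mu$ and its ergodic components, this shows that the real cocycle generated by $\psi^u$ over $(\T^2,F,\mu)$ has zero mean, so by Atkinson's theorem it is recurrent. Iterating this (successive $\mathbb R$-extensions over a conservative base remain conservative, and the stable cocycle also has zero mean) yields that the full $\R^2$-valued displacement cocycle $m\mapsto \tilde F^{\,m}(x)-x$ is recurrent for $\mu$-a.e.\ $x$. Feeding such return times $m_i\to\infty$ (with $\tilde F^{\,m_i}(x)-x\to 0$) into the identity $\tilde F^{\,n^k m}(A^kx)-A^kx=A^k\!\big(\tilde F^{\,m}(x)-x\big)$ and playing the exponential expansion of $A$ in the $e^u$-direction against the uniform bound $o(n^k m)$ coming from the trivial rotation set, one is forced to $\psi^u\equiv 0$, and then symmetrically $u\equiv 0$, $\mu$-a.e.; since $\{u=0\}$ is closed and $\mu$ has full support, $u\equiv 0$ everywhere, i.e.\ $F=\mathrm{Id}$ and hence $f=\mathrm{Id}$.

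The main obstacle is the recurrence step. Atkinson's theorem is one-dimensional and only produces favourable return times "on average", while the conjugation relation can be exploited only along the geometric sequence $n^k$; reconciling these — upgrading the one-dimensional recurrence to genuine recurrence of the two-dimensional displacement cocycle and then synchronizing its return times with the $A$-dynamics so as to beat the factor $\lambda^k$ in the unstable direction — is the delicate part, and it is precisely here that one uses that $f$ preserves area (equivalently, that $F$ admits an invariant probability measure of full support) rather than merely the numeric inequality $\lambda>n$. If a clean two-dimensional recurrence statement is not available, the fallback is to work with $h$ Anosov (not linearized), using the known recurrence in the universal cover of $\mu$-a.e.\ orbit of an area-preserving torus homeomorphism with trivial rotation set, at the cost of replacing the exact eigendirections of $A$ by the (boundedly distorted) stable and unstable foliations of $h$.
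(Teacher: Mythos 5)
Your setup (linearizing $h$, writing $\tilde F^{\,n^k}(x)-x=A^k u(A^{-k}x)$, and splitting $u$ into stable and unstable components) is a faithful translation of the paper's equations \ref{ecu1}--\ref{ecu2}, and your observation that the trivial bound from the rotation set only kills the unstable component when $\lambda\ge n$ correctly identifies where new input is needed. But the new input you propose does not close the argument, and you have in fact flagged the fatal point yourself. First, the passage from one-dimensional Atkinson recurrence of $\psi^u$ to recurrence of the full $\R^2$-valued displacement cocycle is not a technicality: zero-mean $\R^2$-valued cocycles over conservative systems can be transient, and the ``successive $\R$-extensions'' argument breaks down because the second extension sits over an infinite-measure skew product where Atkinson's theorem does not apply. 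Second, even granting full recurrence, the concluding step is not an argument: if $\tilde F^{m_i}(x)-x\to 0$, the identity $\tilde F^{\,n^km}(A^kx)-A^kx=A^k(\tilde F^{\,m}(x)-x)$ only says the orbit of $A^kx$ returns close to itself along the times $n^km_i$ --- there is nothing to play against $\lambda^k$, and no contradiction with $\psi^u\not\equiv 0$ is produced. Third, the claim that the stable component is handled ``symmetrically'' is wrong: the stable direction is contracted, so the same manipulation only shows that the Birkhoff sums of $\psi^s$ along times $n^k$ tend to $0$, which does not force $\psi^s\equiv 0$ without an additional mechanism.

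For comparison, the paper supplies exactly the two mechanisms you are missing. To kill the unstable component without assuming $\lambda>n$, it invokes the Koropecki--Tal trichotomy (Theorem \ref{kt}) for irrotational area-preserving torus homeomorphisms: the ``fully essential $\fix(f)$'' alternative is excluded by building an essential loop in $\T^2\setminus\fix(f)$ from stable and unstable segments (Lemma \ref{fixess}), and in the remaining alternatives the orbits $\{\tilde f^{\,n^m}(\tilde x)\}$ are bounded (Lemma \ref{racd}), which at an $h$-periodic point forces $\tilde f(\tilde x)$ onto the stable manifold of the hyperbolic fixed point of the relevant lift of $h^m$, whence $\nu^s(\gamma_x)\equiv 0$ on the dense set $\per(h)$ (Lemma \ref{bounded}). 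To then kill the stable component, the paper proves that $\varphi_u$ has constant sign (Lemma \ref{sign}, using that $f$ preserves stable semi-leaves, that $\fix(f)$ is $h^{-1}$-invariant, and that iterated unstable segments become dense), so that the vanishing of the Birkhoff sums $\sum_{k<n^m}\varphi_u(f^k(x))=\lambda^{-m}\varphi_u(h^{-m}(x))\to 0$ forces every term to vanish (Corollary \ref{pimba}). You would need substitutes for both of these steps; as written, your proof has genuine gaps at precisely these two places.
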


So, there are no faithful representations of $BS(1,n)$ by torus homeomorphisms with $h$ an Anosov map and $f$ area preserving (regardless of the value of $\lambda$).  This result
is a nice application of the work of A. Koropecki and F. Tal in \cite{korotal}.

The idea behind these results is making the structures of identity isotopies and of (pseudo)-Anosov maps interact.  Namely, we use the transverse measures of the foliations of the (pseudo)-Anosov
to measure the trajectories $(f_t (x))_{t\in [0,1]}$ of points defined by the identity isotopy $(f_t)_{t\in [0,1]}$.

Finally, we point out that several questions remain unanswered.  Is it possible to extend Theorem \ref{teo1} for  $\lambda < n$? Does there exist a $BS(1,n)$ action  with $h$ isotopic to a (pseudo)-Anosov map that
is not semi-conjugate to a non-faithful one? Is it possible to classify the dynamics of the action according to the isotopy class of  $h$?

\end{section}

\begin{section}{Preliminaries}

\begin{subsection}{Definitions and notations}

For the remainder of the paper $S$ will be a closed  surface, $f: S \to S$ will be a homeomorphism isotopic to the identity and $h: S \to S$ will be a (pseudo)-Anosov homeomorphism (see \ref{pa} for the definition).
If $g:S\to S$ is any map, $\fix (g)$ is the set of fixed points of $g$, $\per(g)$ is the set of periodic points of $g$, and $\per ^p (g)$ is the set of periodic points of $g$ of period $p$.

If $X$ is a topological space, we say that any continuous function $\gamma: [0,1]\to X$ is an arc in $X$. If $\gamma_1, \gamma _2: [0,1]\to X$ are arcs, we note $\gamma _1 \cdot \gamma_2$ the standard concatenation of arcs
($\gamma _1 \cdot \gamma_2: [0,1]\to X$).  If $\gamma _1 \ldots \gamma _n$ are arcs, we define $\prod _{i=0}^n \gamma _i= \gamma_ 1 \cdot \ldots \cdot \gamma _n $.

\end{subsection}

\begin{subsection}{Identity isotopies}

 For $f: S \to S$ as before, fix an isotopy $(f_t)_{t\in [0,1]}$ such that $f_0 = \id$ and $f_1 = f$.  For all $x\in S$ we note $\gamma _x$ the arc $t\to f_t (x), t\in [0,1]$.  More generally, for
$x\in S$ and $m\geq 1$ define $\gamma ^m_x = \prod _{i= 0} ^{m-1}
\gamma _{f^i (x)}$.

If $S$ is hyperbolic, the homotopy class of $\gamma _x$ with fixed endpoints is independent of the isotopy $(f_t)_{t\in [0,1]}$; it is determined by $f$.  In this case, a homeomorphism $f: S \to S$ admits a unique lift to the universal covering that
commutes with every element of the group of covering transformations.  We call this lift the {\it cannonical lift} of $f$; it is the lift determined by lifting the isotopy $(f_t)_{t\in [0,1]}$ starting in the identity.  In fact,
on a compact surface of genus $g\geq 2$, two isotopies joining the identity to a homeomorphism $f$ are homotopic (see \cite{ham}).

If $S = \T ^2$, however, the homotopy class of $\gamma _x$ with fixed endpoints depends on the isotopy $(f_t)_{t\in [0,1]}$. In this case, we will further assume that $f$ has a lift $\tilde f$ whose rotation set is $\{(0,0)\}$, and that the isotopy is such that the lift $(\tilde f_t)_{t\in [0,1]}$ to $\R ^2$ starting at the identity verifies $\tilde f _ 1 = \tilde f$. (This assumption can be made if $\langle f, h\rangle$ is a faithful action of $BS(1,n)$ on $\T ^2$, by Proposition \ref{gl}).

We say that $x\in \fix (f)$ is {\it contractible} if the loop $\gamma _x$ is null-homotopic.  Analogously, if $x\in \per ^p (f)$, we say that $x$ is contracible, if the loop $\gamma  ^p_x$ is null-homotopic.

\end{subsection}

\begin{subsection}{pseudo-Anosov homeomorphisms}\label{pa}

We say that $h: S \to S$ is a {\it pseudo-Anosov} homeomorphism if there is a pair of transverse measured singular foliations $({\cal F}^u, \nu ^u)$ and $({\cal F}^s, \nu ^s)$ on $S$ and a number $\lambda >1$ such that
$$h \cdot ({\cal F}^u, \nu ^u) = ({\cal F}^u, \lambda \nu ^u) \textrm{ and } h \cdot ({\cal F}^s, \nu ^s) = ({\cal F}^s, \lambda ^{-1}\nu ^s).$$\noindent The measured foliations $({\cal F}^u, \nu ^u)$ and
$({\cal F}^s, \nu ^s)$ are called the unstable foliation and the stable foliation, respectively, and the number $\lambda$ is called the {\it stretch factor} of $h$.  We recall that the action of $h$ on $({\cal F}, \nu)$ is given by
$$h\cdot ({\cal F}, \nu) = (h({\cal F}),h_*( \nu)),$$\noindent where $h_*( \nu)(\gamma)$ is defined as $\nu (h^{-1} (\gamma))$ for any arc $\gamma$ transverse to $h ({\cal F})$.  So, in particular,
\begin{equation}\label{eq1} \nu ^s (h(\gamma)) = \lambda \nu ^s (\gamma) \end{equation} \noindent for any arc $\gamma$ transverse to the stable foliation, and

\begin{equation}\label{eq2} \nu ^u (h(\gamma)) = \lambda^{-1} \nu ^u (\gamma), \end{equation}\noindent for any arc $\gamma$ transverse to the unstable foliation.  Note that this corresponds to the statement that $h$ shrinks the leaves of the stable foliation and
stretches the leaves of the unstable foliation.\\

We can extend the measures $\nu^s$ and $\nu^u$ to arcs not necessarily transverse to the foliations. We do so as follows

$$ \nu(\gamma) = \inf \{ \sum_i\nu(\alpha_i): \alpha = \alpha_1\cdots\alpha_k \sim \gamma \mbox{ rel endpoints, and } \alpha_i \mbox{ transverse to } {\cal F} \}$$

where $\nu$ (and ${\cal F}$) stand for either $\nu^s$ or $\nu^u$ (respectively ${\cal F}^s$ or ${\cal F}^u$). Notice that equations \ref{eq1} and \ref{eq2} still hold for a general arc $\gamma$. By construction, $\nu^s$ and $\nu^u$ are invariant under homotopy with fixed endpoints, and they are subadditive, that is: $$\nu(\alpha\cdot\beta) \leq \nu(\alpha) + \nu(\beta). $$

We will use several properties of pseudo-Anosov homeomorphisms that we state in this section.  The reader not familiar with this theory should refer to Chapter Fourteen in \cite{fm}.

\begin{lema}\label{pap} The following holds:
\begin{enumerate}
 \item Let $({\cal F}, \nu)$ be the stable or unstable measured foliation of a pseudo-Anosov homeomorphism on a compact surface $S$.  Then, $\nu (\alpha) > 0$ for every essential simple
 closed curve $\alpha\in S$.
\item Moreover, $\nu(\alpha) = 0$ if and only if $\alpha$ is homotopic with fixed endpoints to a leaf segment of ${\cal F}$.
\item For any leaf segment $l$ of ${\cal F}^u$ and any $\epsilon>0$, there is $m\geq 0$ so that $h^m(l)$ is $\epsilon$-dense in $S$.
\item For any pseudo-Anosov homeomorphism $h$ of a compact surface $S$, the periodic points of $h$ are dense.
\item A pseudo-Anosov homeomorphism $h$ of a compact surface $S$ has a dense orbit.
\item The functions $x\to \nu ^s (\gamma _x)$ and $x\to \nu ^u (\gamma _x)$ are continuous, and thus bounded on $S$.
\end{enumerate}

\end{lema}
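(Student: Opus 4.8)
The plan is to treat (1)--(5) as classical facts about pseudo-Anosov homeomorphisms and their invariant measured foliations, reducing them to results in Chapter Fourteen of \cite{fm}; only (6), which involves the identity isotopy $(f_t)$, requires an argument specific to our setting. Statement (1) follows from (2): if $\alpha$ is an essential simple closed curve, then by (2) the equality $\nu(\alpha)=0$ would force $\alpha$ to be homotopic rel basepoint to a leaf segment, but a non-constant leaf segment that is a loop would be a closed leaf --- which the stable and unstable foliations of a pseudo-Anosov do not have --- while a constant leaf segment would make $\alpha$ inessential; hence $\nu(\alpha)>0$. For (2) itself, one implication is an approximation argument: if $\alpha$ is homotopic rel endpoints to a leaf segment $l$, one pushes $l$ off itself so that it runs through a chain of flow boxes along the leaf making arbitrarily small transverse excursions, so the infimum defining $\nu(\alpha)$ vanishes. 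For the converse, one takes decompositions $\alpha=\alpha_1\cdots\alpha_k$ into transverse subarcs with $\sum_i\nu(\alpha_i)\to 0$ and, using compactness of $S$ together with the local product structure of the foliation, extracts a limiting arc contained in a single leaf; this is the usual ``zero transverse measure forces a leaf segment'' argument.

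For (3): every leaf of ${\cal F}^u$ is dense in $S$ (the unstable foliation of a pseudo-Anosov is minimal), so if $S$ is covered by finitely many $\epsilon/2$-balls then any fixed leaf contains a compact subarc $l_0$ meeting all of them, and $l_0$ is $\epsilon$-dense; by the local product structure there is $C=C(\epsilon)$ so that every subarc of an unstable leaf of $\nu^s$-length at least $C$ already meets all those balls. Since $h$ multiplies the $\nu^s$-length of a subarc of an unstable leaf by $\lambda>1$, the $\nu^s$-length of $h^m(l)$ tends to infinity, so $h^m(l)$ is $\epsilon$-dense once $m$ is large enough. Items (4) and (5) --- density of periodic points and existence of a dense orbit --- are the periodic-point density and topological transitivity statements coming from the Markov partition / subshift-of-finite-type description of a pseudo-Anosov map, and I would simply cite these from \cite{fm}.

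The one item requiring genuine work here is (6). Fix $x\in S$ and $x_j\to x$; since the isotopy $(f_t)$ is jointly continuous, the arcs $\gamma_{x_j}$ converge uniformly to $\gamma_x$, so for $j$ large the concatenation $\sigma_j\cdot\gamma_{x_j}\cdot\tau_j$ --- where $\sigma_j$ is a short arc from $x$ to $x_j$ and $\tau_j$ a short arc from $f(x_j)$ to $f(x)$ --- is homotopic rel endpoints to $\gamma_x$. Subadditivity and homotopy invariance of $\nu$ then give $\nu(\gamma_x)\leq \nu(\sigma_j)+\nu(\gamma_{x_j})+\nu(\tau_j)$, and the analogous inequality with the roles of $\gamma_x$ and $\gamma_{x_j}$ exchanged; so it suffices to show that the $\nu$-measure of an arc tends to $0$ as its diameter does. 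This is the main obstacle: a short arc may a priori cross many leaves, so one must use the local product structure --- an arc of small diameter lies in a bounded union of flow boxes, and inside a nonsingular flow box the transverse measure of a subarc is at most the box width --- and one must make this bound uniform near the finitely many singular points, where the flow box is replaced by its $k$-pronged model. Granting this uniform ``small diameter implies small $\nu$'' estimate, the short connecting arcs above have $\nu\to 0$, hence $\nu(\gamma_{x_j})\to\nu(\gamma_x)$, which is the asserted continuity of $x\mapsto\nu^s(\gamma_x)$ and $x\mapsto\nu^u(\gamma_x)$; boundedness follows at once from compactness of $S$.
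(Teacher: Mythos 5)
Your proposal is correct in outline, but it is worth noting that the paper does not actually prove this lemma: items (1)--(5) are stated as classical facts with a blanket reference to Chapter Fourteen of \cite{fm}, and item (6) is asserted without comment. So where the paper punts, you supply proof sketches, and those sketches are consistent with the standard arguments. Two remarks. First, in item (3) the step from ``each leaf is dense'' to ``there is $C=C(\epsilon)$ such that \emph{every} unstable arc of $\nu^s$-length at least $C$ is $\epsilon$-dense'' is the real content, and minimality of a single leaf plus the local product structure does not hand it to you directly; the clean route is either a compactness argument over all separatrices (density of each half-leaf, plus continuity of the foliation, makes the required length locally bounded, hence bounded) or the mixing property of a Markov partition (some power of $h$ sends every rectangle across every rectangle, and any sufficiently long unstable arc contains a full unstable fiber of some rectangle). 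Second, in your derivation of (1) from (2), a non-constant leaf segment that returns to its starting point need not literally be a closed leaf (it could backtrack inside a leaf), but since pseudo-Anosov leaves are lines or stars of separatrices, any such path is null-homotopic, which still contradicts essentiality; the conclusion stands. Your treatment of (6) --- reducing continuity to the uniform estimate that arcs of small diameter have small transverse measure, verified in nonsingular flow boxes and in the $k$-pronged local models --- is exactly the argument the paper implicitly relies on when it claims continuity ``by continuity of the stable and unstable foliations,'' and it is the one item genuinely worth writing out.
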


%For a proof of these results see respectively Corollary 14.12, Theorem 14.19 and Theorem 14.17 in \cite{fm}

\subsection{Orientation covers for foliations}\label{oc}

A measured singular foliation $({\cal F}, \nu)$ on a surface $S$ may not be orientable.  It is easy to see that this is the case if the foliation has a singularity with an odd number of prongs. Moreover, the stable and unstable
foliations for a pseudo-Anosov homeomorphism can fail to be orientable (see Corollary 14.13 in \cite{fm}).  However, if ${\cal F}$ is not
orientable, there exists a connected twofold branched cover $$p: \tilde S \to S,$$\noindent called the {\it orientation cover} of $S$ for ${\cal F}$ such that there is an induced measured foliation $(\tilde{\cal F}, \tilde \nu)$ on
$\tilde S$ that is orientable and such that $p$ maps leaves of $\tilde {\cal F}$ to leaves of ${\cal F}$ and $p_* (\tilde \nu) = \nu$.  The branch points of the cover are exactly the preimages under $p$ of the singularities of
${\cal F}$ with an odd number of prongs.

The construction of the orientation cover is similar to the standard construction of the orientation double cover of a non-orientable manifold. We refer the reader to page 403 on \cite{fm} for details.

\end{subsection}

\begin{subsection}{The Baumslag-Solitar equation}

The following is an easy consequence of the group relation $hfh^{-1}=f^n$ that we will need later on.

\begin{lema}\label{bsp} Let $f,h$ be such that $hfh ^{-1} = f ^n$.  Then:
\begin{enumerate}
 \item For all $m\geq 1$, $h^m f h ^{-m} = f ^{n^m} $;
 \item $h^{-1} (\fix (f))\subset \fix (f)$
\end{enumerate}

\end{lema}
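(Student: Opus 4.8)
The plan is to derive both statements directly from the relation $hfh^{-1}=f^n$ by elementary group manipulations, with no dynamics involved. For part (1), I would argue by induction on $m$. The case $m=1$ is the hypothesis. Assuming $h^m f h^{-m}=f^{n^m}$, I would compute
\[
h^{m+1} f h^{-(m+1)} = h\left(h^m f h^{-m}\right)h^{-1} = h f^{n^m} h^{-1} = \left(h f h^{-1}\right)^{n^m} = \left(f^n\right)^{n^m} = f^{n^{m+1}},
\]
using that conjugation is a group automorphism (so it commutes with taking powers) and then the base relation. This closes the induction.

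For part (2), let $x\in\fix(f)$ and set $y=h^{-1}(x)$; I want to show $f(y)=y$. From the relation we get $f^n = hfh^{-1}$, hence $f^n(y) = h f h^{-1}(y) = h f(x) = h(x) = y$, so $y$ is a periodic point of $f$ of period dividing $n$. To upgrade ``periodic'' to ``fixed,'' I would note that the same computation with $h^m$ in place of $h$ (using part (1)) gives $f^{n^m}(h^{-m}(x)) = h^{-m}(x)$ for every $m\ge 1$; but more simply, I can iterate: since $f^n(y)=y$, the orbit $\{y, f(y), \dots, f^{n-1}(y)\}$ is finite, and applying $h$ conjugates the action of $f$ on this orbit to the action of $f^n$, which fixes $x=h(y)$. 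Actually the cleanest route is: $h^{-1}(\fix(f)) \subseteq \fix(f^n)$ is immediate as above, and then one checks $\fix(f^n)\cap h^{-1}(\fix(f)) = h^{-1}(\fix(f))$ reduces to showing any such $y$ actually lies in $\fix(f)$, which follows because $f(y)$ is again in $h^{-1}(\fix(f))$ (as $h(f(y)) = f^n(h(y)) = f^n(x) = x$ since $x$ is $f$-fixed), so $f$ permutes the finite set $h^{-1}(\fix(f)\cap\{x\})$...

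The only genuine subtlety is this last point, so let me handle it head-on: given $x \in \fix(f)$, the full preimage $h^{-1}(x)$ is a single point $y$ (as $h$ is a homeomorphism), and I showed $h(f(y)) = f^n(h(y)) = f^n(x) = x$, hence $f(y) \in h^{-1}(x) = \{y\}$, so $f(y)=y$. This is in fact immediate and requires no periodicity argument at all — I expect the ``main obstacle'' to be nothing more than resisting the temptation to overcomplicate, and simply observing that $h$ being a bijection forces the preimage of a point to be a point. Thus $h^{-1}(\fix(f))\subset\fix(f)$, completing the proof.
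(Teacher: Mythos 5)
Your proof is correct, and the paper offers no proof at all for this lemma (it is stated as ``an easy consequence'' of the relation), so your elementary group-theoretic argument is exactly what is intended: induction via $h^{m+1}fh^{-(m+1)}=h(f^{n^m})h^{-1}=(hfh^{-1})^{n^m}=f^{n^{m+1}}$ for (1), and $h(f(y))=f^n(h(y))=f^n(x)=x$ together with injectivity of $h$ for (2). One small caveat: the intermediate computation ``$f^n(y)=hfh^{-1}(y)=hf(x)$'' is wrong as written (with $y=h^{-1}(x)$ one has $h^{-1}(y)=h^{-2}(x)$, not $x$), but you correctly discard that detour, and the final paragraph — equivalently, $f=h^{-1}f^nh$ gives $f(h^{-1}(x))=h^{-1}(f^n(x))=h^{-1}(x)$ directly — is a complete and clean proof.
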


\end{subsection}

\end{section}

\begin{section}{Examples of actions with (pseudo)- Anosov classes}\label{ex}

In this section we provide examples of homeomorphisms $f,h:S\to S$ defining a faithful action of  $BS(1,n)= \langle a,b : a b a ^{-1} = b ^n\rangle$ on a surface $S$, where $h$ is isotopic to a (pseudo)- Anosov map.

Let $S$ be a closed surface of genus $g\geq 1$ and $h_1:S\to S$ a (pseudo)- Anosov homeomorphism with a fixed point $x\in S$.

\noindent {\em Example 1:} Consider the standard action of $BS(1,n)$ on $S^2 = \C\cup\{\infty \}$ generated by the M\"obius transformations $f_0(z) = z+1$ and $h_0(z) = nz$. It is known that this action is faithful. Moreover, every orbit is free except for the global fixed point at $\infty$. Blow up this fixed point to a circle to obtain a faithful action $\hat f_0,\hat h_0 : D \to D$ of $BS(1,n)$ on the disk $D$, fixing every point in the boundary $\partial D$.

On the other hand, blow up $x\in S$ to a circle, obtaining a homeomorphism $\hat h_1: \hat S\to \hat S$ on the surface with boundary $\hat S$, fixing every point in $\partial\hat S$.

Glue $D$ and $\hat S$ by their boundaries, obtaining a surface $S'$ homeomorphic to $S$. Define $f$ and $h$ on $S'$ so that $f|_D = \hat f_0$, $f|_{\hat S} = id_{\hat S}$, $h|_D=\hat h_0$ and $h|_{\hat S} = \hat h_1$. Clearly they agree on the identified boundaries.
The resulting action is faithful, since the action on $D$ is. Identifying $S$ and $S'$, we have that $h$ is isotopic to $h_1$. This example is somewhat trivial because the faithful action only happens on a disk whose complement contains all the homology of $S$. Moreover, it is semi-conjugate to the non-faithful action on $S$ defined by letting $b$ act as the identity and $a$ as $h_1$. The semi-conjugacy is $P:\hat S \to S$ the map that collapses $D$.

\noindent {\em Example 2:} Let $y\in S$ and $x_j = h_1^j(y)$ for $j\in \Z$. Blow up each $x_j$ to a circle, and glue back a disk $D_j$ to obtain a surface $S'$ homeomorphic to $S$ as before. To picture it,imagine that the disk $D_j$ has area $2^{-|j|}$. Define $\hat S = S'-\bigcup D_j$ and $\hat h_1: \hat S \to \hat S$ the blow-up map of $h_1$.

Let $\varphi_j:D\to D_j$ be an identification of each $D_j$ with the disk $D$ in the previous example. Also, let $\hat f_0$ and $\hat h_0$ be as before. Define $f:S'\to S'$ by $f|{\hat S} = id_{\hat S}$ and $f|_{D_j} = \varphi_j \hat f_0 \varphi_j^{-1}$. That is, like $\hat f_0$ on each disk and the identity everywhere else. Define $h:S'\to S'$ by $h|_{\hat S} = \hat h_1$ and $h|_{D_j} = \varphi_{j+1}\hat h_0 \varphi_j^{-1}$.

This action is also faithful, and $h$ is still isotopic to $h_1$ (when regarding $S'$ as $S$). We can pick $y$ to have a dense orbit under $h_1$, and in that case the action is free on an open dense set. The complement of this set, i.e. $\hat S$, still contains all the homology of $S$. And the action is again semi-conjugate to the action by the identity and $h_1$, by collapsing each disk $D_j$.

%\noindent {Example 3:} Let $\lambda>1$ be the stretch factor

\end{section}

\begin{section} {Actions of $BS(1,n)$ with (pseudo)- Anosov elements}

Recall that if $S$ is hyperbolic, $f$ has a cannonical lift (the unique lift that commutes with the group of covering transformations). If $S= \T ^2$ by the cannonical lift we mean the irrotational lift (see Proposition \ref{gl}). In
both cases, we denote $\tilde S$ the universal covering space of $S$.

\begin{lema}\label{up}  Let $\tilde f: \tilde S \to \tilde S$ be the cannonical lift of $f$, and $\tilde h:\tilde S\to \tilde S$ be any lift of $h$.  Then, the $\bs$ equation holds for $\tilde f$ and $\tilde h$, that is $$\tilde h \tilde f \tilde h ^{-1} = \tilde f ^n .$$

\end{lema}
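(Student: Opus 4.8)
The plan is to note that $\tilde g := \tilde h\tilde f\tilde h^{-1}$ and $\tilde f^n$ are two lifts of the same homeomorphism of $S$, so they differ by a covering transformation, and the whole point is to show this covering transformation is the identity. Indeed, using the $\bs$ relation $hfh^{-1}=f^n$ one checks that $p\circ\tilde g$ and $p\circ\tilde f^n$ both equal $f^n\circ p$, where $p\colon\tilde S\to S$ is the covering projection; hence $\tilde g = T\circ\tilde f^n$ for a unique $T$ in the group $\Gamma$ of covering transformations, and it remains to prove $T=\id$. The two cases $g\geq 2$ and $S=\T^2$ must be treated separately, because the canonical lift is characterized differently in each.

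Suppose first $g\geq 2$. I would record two facts. First, any lift $\tilde h$ of $h$ normalizes $\Gamma$: for $\gamma\in\Gamma$ the map $\tilde h\gamma\tilde h^{-1}$ covers $h\circ\id_S\circ h^{-1}=\id_S$, hence lies in $\Gamma$. Second, the canonical lift $\tilde f$ commutes with every element of $\Gamma$ by definition. Then, for $\gamma\in\Gamma$, writing $\gamma':=\tilde h^{-1}\gamma\tilde h\in\Gamma$, we obtain $\tilde g\,\gamma\,\tilde g^{-1}=\tilde h\tilde f\gamma'\tilde f^{-1}\tilde h^{-1}=\tilde h\gamma'\tilde h^{-1}=\gamma$, the middle equality using that $\tilde f$ commutes with $\gamma'$. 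Thus $\tilde g$ commutes with all of $\Gamma$; since $\tilde f^n$ does as well, the element $T=\tilde g\tilde f^{-n}$ commutes with every element of $\Gamma\cong\pi_1(S)$. As a closed surface group of genus $g\geq 2$ has trivial center, $T=\id$.

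Now suppose $S=\T^2$. Here ``commuting with $\Gamma=\Z^2$'' no longer singles out a lift --- which is precisely why one replaces the canonical lift by the irrotational one --- so I would argue via rotation sets. Write $\tilde h(x)=Ax+b(x)$, where $A\in\mathrm{GL}(2,\Z)$ is the action of $h$ on $H_1$; since $b(x+v)=b(x)$ for $v\in\Z^2$, the map $b$ is bounded. For $x\in\R^2$ set $y=\tilde h^{-1}(x)$; then $\tilde g^k(x)-x=\tilde h(\tilde f^k(y))-\tilde h(y)=A\big(\tilde f^k(y)-y\big)+\big(b(\tilde f^k(y))-b(y)\big)$. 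Dividing by $k$ and using that $\rho(\tilde f)=\{(0,0)\}$, so that $\tfrac{1}{k}(\tilde f^k(y)-y)\to 0$ uniformly in $y$, together with the boundedness of $b$, we get $\rho(\tilde g)=A\,\rho(\tilde f)=\{(0,0)\}$. On the other hand $\rho(\tilde f^n)=n\,\rho(\tilde f)=\{(0,0)\}$, and if $T$ is translation by $w\in\Z^2$ then, since $\tilde f^n$ commutes with integer translations, $\rho(\tilde g)=\rho(T\circ\tilde f^n)=w+\rho(\tilde f^n)=\{w\}$. Comparing, $w=0$ and $T=\id$.

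The computations are routine; the genuinely delicate point is structural, namely that the torus case cannot reuse the surface-group argument (the center of $\Z^2$ is all of $\Z^2$) and instead rests on the transformation law $\rho(\tilde h\,\tilde\varphi\,\tilde h^{-1})=A\,\rho(\tilde\varphi)$ for rotation sets under conjugation by $\tilde h$. One should also keep in mind that $\rho(\tilde f)=\{(0,0)\}$ is used in its strong, uniform-convergence form, which is what makes the bounded correction term $b(\tilde f^k(y))-b(y)$ negligible after division by $k$.
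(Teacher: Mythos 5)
Your proof is correct, but it follows a genuinely different route from the paper's in both cases. For $g\geq 2$, the paper invokes Hamstrom's theorem that any two identity isotopies are homotopic, compares the isotopies $(hf_th^{-1})_t$ and $\prod_{i=0}^{n-1}f_t$, and deduces that the arcs $\gamma^n_x$ and $h(\gamma_{h^{-1}(x)})$ are homotopic rel endpoints, which forces the two lifts to agree pointwise; you instead exploit the algebraic characterization of the canonical lift (the unique lift commuting with the deck group $\Gamma$), show that $\tilde h\tilde f\tilde h^{-1}$ also commutes with $\Gamma$ because $\tilde h$ normalizes $\Gamma$, and finish with the triviality of the center of a genus $\geq 2$ surface group. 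Your version is more self-contained and purely algebraic, while the paper's version has the side benefit of directly establishing the rel-endpoints homotopy of arcs that it reuses in Corollary \ref{arcsh} (though that corollary is anyway rederived from the lemma). For $S=\T^2$, the paper pins down the deck transformation by evaluating both lifts at a lift of a contractible fixed point of $f$, whose existence comes from Proposition \ref{gl}; you instead use the transformation law for rotation vectors under conjugation by $\tilde h(x)=Ax+b(x)$ with $b$ bounded, needing only that $\rho(\tilde f)=\{(0,0)\}$ and not the existence of a fixed point. That is a slightly weaker hypothesis, which is a genuine (if small) gain; note also that for the final comparison you only need pointwise convergence of $\tfrac1k(\tilde f^k(y)-y)$ at a single $y$, so the appeal to uniform convergence (Misiurewicz--Ziemian for singleton rotation sets), while valid, is not essential. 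Both arguments are complete and correct.
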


\begin{proof}   If $S$ is hyperbolic any two isotopies from the identity to a given homeomorphism $g: S \to S$ are homotopic (see \cite{ham}). Both isotopies $(h f_t h ^{-1} )_{t\in[0,1]}$ and $\prod_{i= 0} ^{n-1} f_t $ join the
identity to $f^{n}$ (the product
stands for concatenation of arcs in the space $\homeo (S)$). In particular,
the arcs $\gamma ^{n}_x$ and $h (\gamma_{h^{-1}(x)})$ are homotopic with fixed endpoints.  This means that $\tilde f ^{n} (\tilde x) = \tilde h  \tilde f \tilde h ^{-1} (\tilde x)$ for all $\tilde x \in \tilde S$ as we wanted.

If $S = \T ^2$, as $\tilde f$ is the irrotational lift for $f$, we have that $f^p (x) = x$ implies $\tilde f ^p (\tilde x) = \tilde x$ for any lift $\tilde x$ of $x$. We know that  $\tilde h \tilde f$ and $\tilde f ^n \tilde h$ are
both lifts of the same map $hf$.  Moreover, by Proposition \ref{gl} $f$ has a fixed point $x$ and $\tilde f (\tilde x) = \tilde x$
for any lift $\tilde x$ of $x$. So, $\tilde h \tilde f \tilde x = \tilde h \tilde x$.  Besides, $f^n h (x) = h f (x) = h (x)$ implies $\tilde f ^n \tilde h (\tilde x) = \tilde h (\tilde x)$.  So, the lifts $\tilde h \tilde f$ and $\tilde f ^n \tilde h$ of $hf$ coincide over $\tilde x$, which implies they are identical, that is $$\tilde h \tilde f
\tilde h ^{-1} = \tilde f ^n$$\end{proof}

\begin{cor}\label{arcsh}  For any $x\in S$ and $m\geq 0$, the arcs $\gamma ^{n^m}_x$ and $h^m (\gamma_{h^{-m}(x)})$ are homotopic with fixed endpoints.
\end{cor}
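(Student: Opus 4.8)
The plan is to lift everything to the universal cover $\tilde S$ and compare the endpoints of the lifted arcs. First I would record the iterated form of the relation: starting from Lemma \ref{up} (which gives $\tilde h \tilde f \tilde h^{-1} = \tilde f^n$ for the canonical lift $\tilde f$ and any fixed lift $\tilde h$ of $h$) and arguing exactly as in Lemma \ref{bsp}(1), one obtains, for every $m \geq 0$,
$$\tilde h^m \tilde f \tilde h^{-m} = \tilde f^{n^m}.$$

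Next I would make precise how the isotopy arcs lift. Since $\tilde f$ is by construction the time-one map $\tilde f_1$ of the lift $(\tilde f_t)_{t\in[0,1]}$ of $(f_t)_{t\in[0,1]}$ starting at the identity, the lift of $\gamma_x = (f_t(x))_{t\in[0,1]}$ that begins at a lift $\tilde x$ of $x$ is $(\tilde f_t(\tilde x))_{t\in[0,1]}$, hence it ends at $\tilde f(\tilde x)$. Concatenating and inducting on $i$, the lift of $\gamma^m_x = \prod_{i=0}^{m-1}\gamma_{f^i(x)}$ starting at $\tilde x$ ends at $\tilde f^m(\tilde x)$; in particular the lift of $\gamma^{n^m}_x$ starting at $\tilde x$ ends at $\tilde f^{n^m}(\tilde x)$.

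Then I would compute the endpoints of the lift of $h^m(\gamma_{h^{-m}(x)})$. Viewing this arc as $t\mapsto h^m\bigl(f_t(h^{-m}(x))\bigr)$, its lift is $\tilde h^m$ applied to a lift of $\gamma_{h^{-m}(x)}$. Choosing the lift $\tilde y = \tilde h^{-m}(\tilde x)$ of $h^{-m}(x)$, the lift of $\gamma_{h^{-m}(x)}$ starting at $\tilde y$ runs from $\tilde y$ to $\tilde f(\tilde y)$, so applying $\tilde h^m$ gives the lift of $h^m(\gamma_{h^{-m}(x)})$ running from $\tilde h^m(\tilde y) = \tilde x$ to $\tilde h^m \tilde f(\tilde y) = \tilde h^m \tilde f \tilde h^{-m}(\tilde x)$, which equals $\tilde f^{n^m}(\tilde x)$ by the first step. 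Thus both $\gamma^{n^m}_x$ and $h^m(\gamma_{h^{-m}(x)})$ lift to arcs in $\tilde S$ with the same initial point $\tilde x$ and the same terminal point $\tilde f^{n^m}(\tilde x)$; since $\tilde S$ is simply connected these lifts are homotopic rel endpoints, and pushing the homotopy down shows the two arcs are homotopic with fixed endpoints in $S$.

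The only delicate point is the bookkeeping of lifts — in particular, that $\tilde f$ is genuinely realized as the time-one map of the lifted identity isotopy, so that the endpoint computations above are legitimate (and that in the torus case the irrotational lift plays this role). This was, however, arranged in the Preliminaries when the canonical/irrotational lift was set up, so no real obstacle arises; the corollary is essentially a direct unwinding of Lemma \ref{up}.
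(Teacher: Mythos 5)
Your proof is correct and follows essentially the same route as the paper: both lift to the universal cover, use Lemma \ref{up} together with Lemma \ref{bsp}(1) to get $\tilde h^m \tilde f \tilde h^{-m} = \tilde f^{n^m}$, and then check that both arcs lift to paths from $\tilde x$ to $\tilde f^{n^m}(\tilde x)$, which forces them to be homotopic rel endpoints since $\tilde S$ is simply connected. Your extra care about $\tilde f$ being the time-one map of the lifted isotopy (especially in the torus case) is exactly the bookkeeping the paper arranges in its Preliminaries.
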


\begin{proof}  The lemma above implies that for all $m\geq 0$, $\tilde h^m \tilde f \tilde h ^{-m} = \tilde f^{n^m}$, where $\tilde h$ is any lift of $h$ and $\tilde f$ is the cannonical lift for $f$ (see Lemma \ref{bsp} item 1).
The lift $\tilde \gamma ^{n^m} (x)$ of $\gamma ^{n^m} (x)$ starting at $\tilde x$ ends at $\tilde f ^{n^m} (x)$, and the lift $\tilde \gamma _{h^{-m}(x)}$ of $\gamma _{h^{-m}(x)}$ starting at $\tilde h ^{-m} (\tilde x)$ ends at
$\tilde f \tilde h ^{-m} (x)$.  So, $\tilde h^m \tilde \gamma_{h^{-m} (x)}$ has the same endpoints that $\tilde \gamma ^{n^m} (x)$, finishing the proof.
\end{proof}

\begin{cor}\label{arcs}  For any $x\in S$ and $m\geq 0$, the following equations hold: $$\nu ^s (\gamma ^{n^m}_x) = \lambda ^m \nu ^s (\gamma_{h^{-m}(x)}),$$
$$\nu ^u (\gamma ^{n^m}_x) = \lambda ^{-m} \nu ^u (\gamma_{h^{-m}(x)}).$$

\end{cor}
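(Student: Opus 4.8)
The plan is to deduce Corollary \ref{arcs} directly from Corollary \ref{arcsh}, combined with the homotopy invariance of the extended transverse measures $\nu^s$ and $\nu^u$ and the scaling identities \ref{eq1} and \ref{eq2}. Corollary \ref{arcsh} already identifies $\gamma^{n^m}_x$ with $h^m(\gamma_{h^{-m}(x)})$ up to homotopy rel endpoints, so the only remaining task is to compute how $\nu^s$ and $\nu^u$ behave under the $m$-fold application of $h$ to an arc.

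First I would record that, since $\nu^s$ and $\nu^u$ are invariant under homotopy with fixed endpoints (this was built into the definition of the extension of the transverse measures to arbitrary arcs), Corollary \ref{arcsh} gives
$$\nu^s(\gamma^{n^m}_x) = \nu^s\bigl(h^m(\gamma_{h^{-m}(x)})\bigr), \qquad \nu^u(\gamma^{n^m}_x) = \nu^u\bigl(h^m(\gamma_{h^{-m}(x)})\bigr).$$
Next I would iterate the scaling identities. Equation \ref{eq1}, which holds for an arbitrary arc, gives $\nu^s(h(\gamma)) = \lambda\,\nu^s(\gamma)$; applying it successively to $\gamma$, then $h(\gamma)$, and so on, yields $\nu^s(h^m(\gamma)) = \lambda^m\,\nu^s(\gamma)$ by induction on $m$. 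Taking $\gamma = \gamma_{h^{-m}(x)}$ produces the first identity of the corollary. The second identity is obtained in exactly the same way, replacing \ref{eq1} by \ref{eq2}: each application of $h$ multiplies $\nu^u$ by $\lambda^{-1}$, so the $m$-fold composition multiplies by $\lambda^{-m}$.

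There is essentially no serious obstacle here; the only point that needs care — and which is already addressed in the Preliminaries — is that the scaling identities \ref{eq1} and \ref{eq2} continue to hold after $\nu^s$ and $\nu^u$ have been extended to arcs that are not transverse to the foliations, so that the induction on $m$ is legitimate. Granting that, the corollary is immediate.
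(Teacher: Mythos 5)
Your proposal is correct and follows essentially the same route as the paper: invoke Corollary \ref{arcsh} together with the homotopy invariance of the extended measures, then apply the scaling identities \ref{eq1} and \ref{eq2} iterated $m$ times. The paper states the $m$-fold scaling in one step where you spell out the induction, but the argument is the same.
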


\begin{proof}  We know that $\nu ^s$ and $\nu ^u$ are invariant by homotopy with fixed endpoints.  So, by Corollary \ref{arcsh} we obtain:
$$\nu ^s (\gamma ^{n^m}_x) = \nu ^s (h^m (\gamma_{h^{-m}(x)})) = \lambda ^m \nu ^s (\gamma_{h^{-m}(x)}),$$
$$\nu ^u (\gamma ^{n^m}_x) = \nu ^u (h^m (\gamma_{h^{-m}(x)})) = \lambda ^{-m} \nu ^u (\gamma_{h^{-m}(x)}).$$

\end{proof}

\begin{lema}\label{fixcont} Every $f$- fixed point is contractible.

\end{lema}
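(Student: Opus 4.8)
The plan is to argue by contradiction, measuring the trajectory loop $\gamma_x$ with the transverse measure $\nu^u$ of the unstable foliation and playing two opposite scalings against each other: conjugating by $h$ (that is, replacing $x$ by $h^{-m}(x)$) contracts $\nu^u$ by the factor $\lambda^{-m}$, while raising an essential loop to the $n^m$-th power scales its $\nu^u$-content by $n^m$. Since $\lambda^{-m}\to 0$ and $n^m\to\infty$, these cannot coexist.

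So suppose $x\in\fix(f)$ but $\gamma_x$ is not null-homotopic. Since $f(x)=x$ we have $f^i(x)=x$ for all $i$, hence $\gamma_x^{\,n^m}=\prod_{i=0}^{n^m-1}\gamma_{f^i(x)}$ is literally the loop $\gamma_x$ concatenated with itself $n^m$ times; in particular its free homotopy class is the $n^m$-th power of the nontrivial class of $\gamma_x$. First I would record the upper bound: by Corollary~\ref{arcs} one has $\nu^u(\gamma_x^{\,n^m})=\lambda^{-m}\,\nu^u(\gamma_{h^{-m}(x)})$, and since by Lemma~\ref{pap}(6) the function $y\mapsto\nu^u(\gamma_y)$ is bounded on $S$, say by $M$, this gives $\nu^u(\gamma_x^{\,n^m})\le M\lambda^{-m}\to 0$ as $m\to\infty$. (Lemma~\ref{bsp}(2) guarantees $h^{-m}(x)\in\fix(f)$, so that $\gamma_{h^{-m}(x)}$ is again a loop, but this is not needed for the estimate.)

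For the lower bound I would use that the invariant foliations of a pseudo-Anosov homeomorphism \emph{fill} the surface: no essential closed curve is freely homotopic to a leaf segment, so by Lemma~\ref{pap}(1)--(2) the geometric intersection number $i(\mathcal F^u,c)$ is positive for every essential free homotopy class $c$ (see \cite{fm}). Since $\nu^u$ is invariant under homotopy with fixed endpoints, it dominates the geometric intersection number of the underlying free homotopy class, and since intersection number is homogeneous under powers we obtain $\nu^u(\gamma_x^{\,n^m})\ge i(\mathcal F^u,\gamma_x^{\,n^m})=n^m\,i(\mathcal F^u,\gamma_x)\to\infty$, contradicting the upper bound. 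Hence $\gamma_x$ is null-homotopic. On $\T^2$ the map $h$ is Anosov and its irrational-slope foliations still fill the torus, so the same argument applies; alternatively one can argue directly that if $\tilde f$ is the irrotational lift and $\tilde x$ a lift of $x$, then $\tilde f(\tilde x)=\tilde x+v$ forces $v=\lim_k\frac1k\big(\tilde f^{\,k}(\tilde x)-\tilde x\big)$ to lie in the rotation set $\{(0,0)\}$, whence $v=0$ and $\gamma_x$ is null-homotopic.

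The delicate point is the lower bound. Subadditivity of $\nu^u$ only yields the useless inequality $\nu^u(\gamma_x^{\,n^m})\le n^m\nu^u(\gamma_x)$, so the positivity of $i(\mathcal F^u,\cdot)$ on essential classes and its homogeneity under powers are genuinely needed; moreover $\gamma_x$ need not be representable by a \emph{simple} closed curve, so Lemma~\ref{pap}(1) alone is not enough and one must invoke the filling property for arbitrary essential curves, or else pass to the orientation cover of Section~\ref{oc}, where $\mathcal F^u$ becomes orientable and $\nu^u$ of a loop is bounded below by the absolute value of its period against the closed $1$-form defining the lifted foliation. Everything else I expect to be routine bookkeeping, and it is worth noting that this argument never uses the hypothesis $\lambda>n$.
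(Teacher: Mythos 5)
Your argument is correct and follows essentially the same strategy as the paper's proof: it plays the decay $\nu^u(\gamma_x^{n^m}) = \lambda^{-m}\nu^u(\gamma_{h^{-m}(x)}) \leq C\lambda^{-m}$ (Corollary \ref{arcs} plus the boundedness from Lemma \ref{pap}) against the growth of the unstable transverse measure of the $n^m$-th power of the loop $\gamma_x$. The only real divergence is how that growth is justified: the paper asserts the equality $\nu^u(\gamma_x^{n^m}) = n^m\nu^u(\gamma_x)$, deduces $\nu^u(\gamma_x)=0$, and then invokes Lemma \ref{pap} to conclude triviality, whereas you bound $\nu^u(\gamma_x^{n^m})$ from below by $n^m\, i({\cal F}^u,[\gamma_x])$ using homogeneity of the geometric intersection number under powers and the filling property of pseudo-Anosov foliations. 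Your caution on this point is well placed: since $\nu^u$ on general arcs is defined by an infimum and is only stated to be subadditive, the paper's multiplicativity is exactly the step that needs the kind of justification you supply (positivity of $i({\cal F}^u,\cdot)$ on all essential classes, not just simple ones, or alternatively Lemma \ref{pap} item 2 together with the fact that leaves of a pseudo-Anosov foliation carry no essential loops). One small caveat: your fallback via periods of the closed $1$-form on the orientation cover would not suffice on its own, because an essential loop may be null-homologous and then its period vanishes; but your main argument does not depend on it, and your direct rotation-set argument for the case $S=\T^2$ is also valid.
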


\begin{proof} If $x\in \fix (f)$, then $\gamma _x$  is a loop. So, for all $m\geq 0$, $\nu ^u (\gamma _x ^{n^m}) = n^m\nu ^u (\gamma _x)$. We want to show that $\gamma _x$ is homotopically trivial for every $x\in \fix(f)$.  Recall that $\nu^u (h(\gamma)) = \lambda ^{-1} \nu^u (\gamma)$. By Corollary \ref{arcs},
$$\lambda ^{-m}\nu ^u (\gamma_{ h^{-m}(x)}) = \nu ^u (\gamma _x ^{n^m}) = n^m\nu ^u (\gamma _x).$$ We obtain $(\frac{1}{\lambda n})^m \nu ^u(\gamma_{h^ {-m}(x)}) = \nu ^u (\gamma _x)$ for all $m\in \N$.  As
$\nu^u (\gamma_{h^ {-m}(x)}) $ is bounded, this implies that $\nu ^u (\gamma _x) = 0$, and therefore that $\gamma _x$ is a trivial loop by Lemma \ref{pap} item 1.\end{proof}

% \begin{cor}\label{percont}  Every $f$- periodic point is contractible.
%
% \end{cor}

% \begin{proof}   Just apply the preceeding argument to the map $f^k$, which satisfies the BS equation $hf^k h^{-1} = (f^k)^n$. \end{proof}
% %
% %
% %
% % \begin{cor}\label{up}  Let $\tilde f: \D \to \D$ be the cannonical lift of $f$, and $\tilde h:\D\to \D$ be any lift of $h$.  Then, the $\bs$ equation holds for $\tilde f$ and $\tilde h$, that is $$\tilde h \tilde f \tilde h ^{-1} = \tilde f ^n .$$
% %
% % \end{cor}
%
% \begin{proof}   We know that $hf = f^n h$, so $\tilde h \tilde f$ and $\tilde f ^n \tilde h$ are both lifts of the same map $hf$.  Moreover, if $x$ is a fixed point of $f$, then $\tilde f (\tilde x) = \tilde x$ for any lift
% $\tilde x$ of $x$ by the lemma above. So, $\tilde h \tilde f \tilde x = \tilde h \tilde x$.  Besides, $f^n h (x) = h f (x) = h (x)$ implies that $h(x)$ is $f$-periodic, and hence contractible, by Corollary \ref{percont}.  In other
% words, $\tilde f ^n \tilde h (\tilde x) = \tilde h (\tilde x)$.  So, the lifts $\tilde h \tilde f$ and $\tilde f ^n \tilde h$ of $hf$ coincide over $\tilde x$, which implies they are identical, that is $$\tilde h \tilde f
% \tilde h ^{-1} = \tilde f ^n$$\end{proof}
%
\end{section}

\begin{lema}\label{nus0}  If $\lambda > n$, then $ \nu ^s (\gamma _x ) = 0$ for all $x\in S$.

\end{lema}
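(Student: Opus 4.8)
The plan is to iterate the Baumslag--Solitar relation and use it to drive the $\nu^s$-length of the isotopy arcs down to zero. First I would fix the constant $M = \sup_{z\in S}\nu^s(\gamma_z)$, which is finite: indeed, by Lemma \ref{pap} item 6 the function $z\mapsto\nu^s(\gamma_z)$ is continuous on the compact surface $S$, hence bounded. This is the only ``analytic'' input needed.

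Next, for any $x\in S$ and any $m\geq 0$, the arc $\gamma_x^{n^m}$ is by definition the concatenation $\gamma_x\cdot\gamma_{f(x)}\cdots\gamma_{f^{n^m-1}(x)}$ of $n^m$ isotopy arcs, so subadditivity of $\nu^s$ yields
$$\nu^s(\gamma_x^{n^m}) \;\leq\; \sum_{i=0}^{n^m-1}\nu^s(\gamma_{f^i(x)}) \;\leq\; n^m M.$$
Now I would invoke Corollary \ref{arcs}, which gives $\nu^s(\gamma_x^{n^m}) = \lambda^m\,\nu^s(\gamma_{h^{-m}(x)})$, and combine it with the previous inequality to get
$$\nu^s(\gamma_{h^{-m}(x)}) \;=\; \lambda^{-m}\,\nu^s(\gamma_x^{n^m}) \;\leq\; \left(\tfrac{n}{\lambda}\right)^{m} M .$$

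Finally, since $h$ is a homeomorphism the map $h^{-m}$ is surjective, so every $y\in S$ can be written $y=h^{-m}(x)$ with $x=h^m(y)$; hence $\nu^s(\gamma_y)\leq (n/\lambda)^m M$ holds for all $y\in S$ and all $m\geq 0$. Because $\lambda>n$, the factor $(n/\lambda)^m$ tends to $0$ as $m\to\infty$, forcing $\nu^s(\gamma_y)=0$ for every $y\in S$. I do not anticipate a real obstacle: the argument is essentially the same one-line estimate used in Lemma \ref{fixcont}, except that here the loop condition is replaced by subadditivity over the $n^m$-fold concatenation, and the hypothesis $\lambda>n$ is exactly what makes the geometric factor decay rather than blow up.
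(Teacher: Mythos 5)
Your proof is correct and is essentially identical to the paper's: both use subadditivity to bound $\nu^s(\gamma_x^{n^m})$ by $n^m$ times the uniform bound, combine this with Corollary \ref{arcs} to get the $(n/\lambda)^m$ decay, and then use surjectivity of $h^{-m}$ (the paper applies the estimate to $x_m = h^m(y)$) to conclude for every point. No differences worth noting.
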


\begin{proof}  By Lemma \ref{arcs}, $$\nu ^s (\gamma ^{n^m}_x) =  \lambda ^m \nu ^s (\gamma_{h^{-m}(x)}) .$$ \noindent So,
 $$\nu ^s (\gamma _{h ^{-m}(x)}) =  \lambda ^{-m} \nu ^s (\gamma_x^{n^m}) \leq \lambda ^{-m} \sum _{i=0}^{n^m-1} \nu ^s (\gamma _{f^i (x)})\leq \left( \frac{n}{\lambda}\right) ^m C , $$\noindent where $C$ is a bound for the function
 $y\to \nu ^s (\gamma _y)$ on $S$. Since this inequality holds for all $x\in S$, we can apply it to $x_m = h^m(y)$ for any $y\in S$, so we get that $$\nu^s (\gamma_y) \leq \left( \frac{n}{\lambda}\right) ^m C  $$ for all $y\in S$ and $m$ a positive integer. Since we are supposing that $\lambda > n$, this implies the lemma.

% As we are supposing that  $\lambda > n$, we obtain $\lim _{m\to \infty}\nu ^s (\gamma _{h ^{-m}(x)}) = 0$. Take $x\in S$ such that $\{h^{-m}(x): m\in \N\}$ is dense (see Lemma \ref{pap} item 3.).
% Then, for $y\in S$ take $m_k$ such that $h^{-m_k} (x)\to y$ as $k\to \infty$.  So, $\nu ^s (\gamma _y) =
%\lim _{k\to \infty} \nu^s (\gamma _ {h^{-m_k}(x)}) = 0$, and we are done.
\end{proof}

\begin{lema}\label{fixFs} If $ \nu ^s (\gamma _x ) = 0$ for all $x\in S$, then $f$ preserves each leaf and semi-leaf of ${\cal F}^s$.

\end{lema}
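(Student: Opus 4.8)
The plan is to exploit Lemma~\ref{pap}(2): since $\nu^s(\gamma_x)=0$ for every $x$, each trajectory $\gamma_x$ is homotopic with fixed endpoints to a leaf segment $\sigma_x$ of $\mathcal F^s$ joining $x$ to $f(x)$ inside the leaf $\ell$ of $\mathcal F^s$ through $x$; hence $f(\ell)\subseteq\ell$ for every regular leaf $\ell$. Applying the same to the reverse of $\gamma_{f^{-1}(x)}$ — an arc from $x$ to $f^{-1}(x)$, still of zero $\nu^s$-measure — gives $f^{-1}(\ell)\subseteq\ell$, so in fact $f(\ell)=\ell$. Throughout I use that $\mathcal F^s$, being the stable foliation of a pseudo-Anosov map, is minimal (see \cite[Ch.~14]{fm}; a closed leaf, for instance, would be an essential simple closed curve of $\nu^s$-measure zero, contradicting Lemma~\ref{pap}(1)); hence it has no saddle connection, and every regular leaf is a line with at least one dense half-leaf — the finitely many non-dense half-leaves being the \emph{separatrices}, each of which limits onto a singularity.

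The crux is to rule out that $f$ reverses a leaf. Here I would pass to the universal cover $\tilde S$ with the canonical lift $\tilde f$ (obtained by lifting the identity isotopy): lifting the homotopies $\gamma_x\sim\sigma_x$ shows that $\tilde f$ maps each leaf $\tilde\ell\cong\R$ of the lifted foliation $\tilde{\mathcal F}^s$ onto itself. Suppose $\tilde f|_{\tilde\ell}$ reversed orientation. Then it has a unique fixed point $\tilde x_0\in\tilde\ell$, and for $\tilde x$ far out on the side of $\tilde x_0$ in $\tilde\ell$ whose projection is a dense half-leaf of $\ell$ (at least one side is, as $\mathcal F^s$ has no saddle connection), the point $\tilde f(\tilde x)$ lies on the opposite side of $\tilde x_0$, so the leaf segment $\tilde\sigma_{\tilde x}=[\tilde x,\tilde f(\tilde x)]_{\tilde\ell}$ contains the subsegment $[\tilde x_0,\tilde x]_{\tilde\ell}$. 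In $\tilde S$ the extended measure $\nu^u$ of a leaf segment of $\tilde{\mathcal F}^s$ coincides with its honest transverse $\nu^u$-length and is additive along the leaf, because — $\tilde S$ being simply connected — a leaf of $\tilde{\mathcal F}^s$ meets each leaf of $\tilde{\mathcal F}^u$ at most once. Consequently $\nu^u(\gamma_x)=\nu^u(\sigma_x)=\nu^u(\tilde\sigma_{\tilde x})\geq\nu^u([\tilde x_0,\tilde x]_{\tilde\ell})$, and the right-hand side equals the transverse $\nu^u$-length of the leaf segment of $\ell$ from $x_0$ to $x$, which tends to $\infty$ as $x$ runs out along a dense half-leaf of the minimal foliation $\mathcal F^s$. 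This contradicts the boundedness of $x\mapsto\nu^u(\gamma_x)$ (Lemma~\ref{pap}(6)). Hence $f$ preserves the orientation of every regular leaf, i.e. it carries each semi-leaf onto a co-directional semi-leaf of the same leaf.

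It remains to treat the singularities. Since $f$ maps each regular leaf onto itself it preserves $S\setminus\Sigma$, where $\Sigma$ is the finite singular set, so $f(\Sigma)=\Sigma$. Given $q\in\Sigma$, pick one of its (at least three) separatrices; it is a half-leaf of a regular leaf $\ell_s$ limiting onto $q$, and since $f(\ell_s)=\ell_s$ with $f|_{\ell_s}$ orientation-preserving, $f$ carries this half-leaf onto a co-directional sub-half-leaf, whose limit point is again $q$; by continuity $f(q)=q$. With $q$ fixed and $\ell_s$ and its orientation preserved, $f$ then maps the separatrix onto itself; since $q$ was arbitrary, $f$ preserves every leaf and semi-leaf of $\mathcal F^s$, as claimed.

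The step I expect to be the main obstacle is the one ruling out reversal of a leaf, for two reasons. First, the measure $\nu^u$ extended to non-transverse arcs is only a sub-additive infimum and can be strictly smaller than the transverse length of a long leaf segment that nearly closes up in $S$; passing to the universal cover (equivalently, to the $\R$-tree that is the leaf space of $\tilde{\mathcal F}^u$) is exactly what removes this cancellation and makes the length estimate legitimate. Second, at the level of singularities one must be careful, since an isotopy-to-the-identity homeomorphism may a priori permute the finite set $\Sigma$ nontrivially; this is why the singular points can be pinned down only after orientation-preservation along leaves has been established.
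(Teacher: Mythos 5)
Your first two steps are sound: leaf preservation follows from Lemma \ref{pap}(2) exactly as in the paper, and your universal-cover argument ruling out orientation reversal on a line leaf (the unique fixed point of an orientation-reversing homeomorphism of $\R$, plus the unboundedness of the transverse $\nu^u$-length of a growing segment of a dense half-leaf, plus the observation that in $\tilde S$ the extended measure of a stable leaf segment equals its honest transverse length) is correct and is a genuinely different, quantitative route; the paper does not even need orientation preservation on regular leaves.

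The gap is in your treatment of the singularities, which is precisely the content of the phrase ``preserves each semi-leaf.'' A separatrix of a singularity $q$ is \emph{not} a half-leaf of a regular line-leaf $\ell_s$: the leaf of ${\cal F}^s$ through any point of a separatrix is the singular leaf, a $k$-pronged star ($k\geq 3$) with $q$ at its branch point. Your Step 1 therefore only shows that $f$ maps this star into itself, possibly \emph{permuting its prongs}, and your Step 2 does not apply to it (an orientation-reversing self-homeomorphism of $\R$ has a unique fixed point, but ``orientation'' and that fixed-point argument make no sense on a star). Consequently the assertions ``$f(\ell_s)=\ell_s$ with $f|_{\ell_s}$ orientation-preserving, so $f$ carries the separatrix onto a co-directional sub-half-leaf limiting onto $q$'' are unjustified, and the possibility that $f$ fixes $q$ but permutes its separatrices --- or even that two points on different prongs are exchanged --- is never excluded. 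This is exactly the case the paper's proof is devoted to: it lifts to the universal cover, notes that $\tilde f$ must fix the branch point of the lifted star, and uses that the canonical lift of a map isotopic to the identity extends to the identity on $\partial\D$, so that the prongs (which land at distinct boundary points) cannot be permuted. Your own technique would also close the gap --- if $\tilde f$ sent a prong $P_i$ to a different prong $P_j$, the leaf path from a point $\tilde x\in P_i$ far from the center to $\tilde f(\tilde x)\in P_j$ would contain the segment from the center to $\tilde x$, whose $\nu^u$-length is unbounded since separatrices are dense, again contradicting Lemma \ref{pap}(6) --- but as written your proof does not make this argument. (Two smaller slips feed the same confusion: separatrices of a pseudo-Anosov foliation, read away from the singularity, \emph{are} dense; and ``$f$ maps each regular leaf onto itself'' does not by itself give $f(S\setminus\Sigma)=S\setminus\Sigma$, since $S\setminus\Sigma$ also contains the regular points of the singular leaves.)
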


\begin{proof} That $f$ preserves the stable leaves is a direct consequence of the hypothesis, since any curve with endpoints in different leaves of ${\cal F}^s$ must have positive $\nu^s$ measure. To show that it preserves
semi-leaves, let $x\in S$ be a singularity of ${\cal F}^s$. Consider a lift of the action to the universal cover $\tilde S = \D$ (it exists by Lemma \ref{up}). Let $\tilde f$ and $\tilde h$ be the respective lifts of $f$ and
$h$. Also, let $\tilde x \in \D$ be a preimage of $x$, and $(\tilde {\cal F}^s,\tilde \nu^s)$ be the measured lamination on $\D$ that covers $({\cal F}^s,\nu^s)$. Notice that $\tilde f$ must preserve the leaves of
$\tilde {\cal F}^s$. Now recall that $\tilde {\cal F}^s(\tilde x)$ is homeomorphic to a star of center $\tilde x$, and whose prongs are the semi-leaves of $\tilde {\cal F}^s(\tilde x)$.
Since $\tilde f$ is an homeomorphism preserving $\tilde {\cal F}^s(\tilde x)$, it must fix the center $\tilde x$ and permute the legs.
Moreover, each semi-leaf is a ray from $\tilde x$ that converges to a point in $\partial \D$. Since $f$ is isotopic to the identity, $\tilde f$ extends continuously to $\bar \D$ as the identity on $\partial \D$, and so it
preserves each semi-leaf of $\tilde {\cal F}^s(\tilde x)$. We obtain the lemma by projecting to $S$.

\end{proof}

\begin{cor}\label{fixh} If  $ \nu ^s (\gamma _x ) = 0$ for all $x\in S$, and $x$ is a singularity of ${\cal F }^s$, then $ x\in  \fix (f)$.

\end{cor}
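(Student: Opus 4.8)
The plan is to read this off the proof of Lemma \ref{fixFs}. Since an Anosov homeomorphism of $\T^2$ has nonsingular stable foliation, the statement is vacuous when $S=\T^2$, so assume $S$ hyperbolic and $\tilde S=\D$. Fix a lift $(\tilde f,\tilde h)$ of the action (which exists by Lemma \ref{up}), let $(\tilde{\mathcal F}^s,\tilde\nu^s)$ be the induced measured foliation on $\D$, and let $\tilde x\in\D$ be a preimage of the given singularity $x$ of $\mathcal F^s$; it is a singularity of $\tilde{\mathcal F}^s$ with $p\geq 3$ prongs. The proof of Lemma \ref{fixFs}, applied under the present hypothesis $\nu^s(\gamma_y)=0$ for all $y$, already establishes that $\tilde f$ preserves the star $\tilde{\mathcal F}^s(\tilde x)$ and, being a homeomorphism, fixes its center $\tilde x$. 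Projecting to $S$ yields $f(x)=x$, i.e. $x\in\fix(f)$.

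To spell out the last step, I would note that $\tilde f$ preserves every leaf and semi-leaf of $\tilde{\mathcal F}^s$ (this is part of the proof of Lemma \ref{fixFs}), so it carries the singular leaf through $\tilde x$ onto itself and in fact fixes each of the $p$ prongs $\tilde\ell_1,\dots,\tilde\ell_p$ setwise. In $\D$ these prongs are embedded rays issuing from $\tilde x$ that converge to pairwise distinct points of $\partial\D$, and any two of them meet only at $\tilde x$; hence $\tilde f(\tilde x)\in\bigcap_{i=1}^{p}\tilde f(\tilde\ell_i)=\bigcap_{i=1}^{p}\tilde\ell_i=\{\tilde x\}$. (Alternatively, $\tilde x$ is the unique point of the star at which more than two arcs meet, a property preserved by the homeomorphism $\tilde f$.)

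The only point requiring care --- and the reason the argument is run in the universal cover, exactly as in Lemma \ref{fixFs} --- is that downstairs a separatrix of $\mathcal F^s$ is typically dense in $S$, so the singular leaf through $x$ need not be a topological star and ``the prongs meet only at $x$'' can fail in $S$. Passing to $\D$, where leaves and semi-leaves are properly embedded and pairwise disjoint off the singular set, is what makes the topological rigidity of the center legitimate. Beyond this, the corollary is immediate.
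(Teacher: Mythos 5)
Your argument is correct and is essentially the paper's: the authors simply write ``Direct from Lemma \ref{fixFs}'', and the content you spell out (that $\tilde f$ preserves the star $\tilde{\cal F}^s(\tilde x)$ in the universal cover and therefore fixes its center, which projects to $f(x)=x$) is exactly the relevant passage of the proof of that lemma. Your extra care about why the argument is run in $\D$ rather than in $S$ is a reasonable elaboration but does not change the route.
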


\begin{proof} Direct from Lemma \ref{fixFs}.
\end{proof}

%\begin{obs}  We have just proven that $\lambda > n$ implies that any $\bs$ action has a global fixed point.

%\end{obs}

Let $p: \tilde S \to S$ be the orientation cover for ${\cal F}^s$ and $\tilde{\cal F}^s$ the induced oriented measured foliation on $\tilde S$ (see Section \ref{oc}).

\begin{lema} The action lifts to $\tilde S$.  That is,  there exist homeomorphisms $\tilde f, \tilde h : \tilde S \to \tilde S$ such that $p\tilde f = f p$, $p\tilde h = h p$ and $\tilde h \tilde f \tilde h ^{-1}=\tilde f ^n$.  Moreover,
$\tilde f$ is isotopic to the identity and $\tilde h$ is pseudo-Anosov with both stable and unstable foliations orientable.

\end{lema}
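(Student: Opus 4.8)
The plan is to construct the lift of the action to the orientation cover $p:\tilde S\to S$ for $\mathcal F^s$ by using a standard branched covering-space argument: the cover $\tilde S$ carries an involution $\sigma$ (the deck transformation of the double cover) whose fixed points are precisely the branch points, and lifts of homeomorphisms of $S$ exist and are unique once we specify how they commute with $\sigma$. First I would recall the explicit description of $p$. The complement of the odd-pronged singularities, call it $S_0 = S \setminus \Sigma_{\mathrm{odd}}$, is an open surface on which $\mathcal F^s$ restricts to a foliation; the orientation cover over $S_0$ is the genuine double cover determined by the index-two subgroup of $\pi_1(S_0)$ consisting of loops along which a local orientation of $\mathcal F^s$ is preserved, and then $\tilde S$ is obtained by filling in the punctures. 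A homeomorphism $g:S\to S$ that preserves $\mathcal F^s$ as a (non-oriented) foliation permutes $\Sigma_{\mathrm{odd}}$, hence restricts to $g_0:S_0\to S_0$, and since $g_0$ carries orientation-preserving loops to orientation-preserving loops it preserves the relevant index-two subgroup up to the usual basepoint ambiguity; therefore $g_0$ admits exactly two lifts to the double cover over $S_0$, differing by $\sigma$, and each extends over the branch points to a homeomorphism of $\tilde S$.

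Next I would apply this to $f$ and $h$. Both preserve $\mathcal F^s$: for $f$ this is Lemma \ref{fixFs} (available once $\nu^s(\gamma_x)=0$ for all $x$, which holds here since $\lambda>n$ by Lemma \ref{nus0}), and for $h$ it is part of the definition of pseudo-Anosov, $h\cdot(\mathcal F^s,\nu^s)=(\mathcal F^s,\lambda^{-1}\nu^s)$. So each of $f$ and $h$ has two lifts. To pin down a canonical choice for $\tilde f$, I would use that $f$ is isotopic to the identity and has a fixed point: one of its two lifts fixes a chosen preimage of a fixed point of $f$ (the other moves it by $\sigma$), and I declare that one to be $\tilde f$. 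Equivalently, the isotopy $(f_t)$ lifts to an isotopy on $S_0$ starting at the identity and hence singles out the lift homotopic to the identity; this makes $\tilde f$ isotopic to the identity on $\tilde S$. For $\tilde h$ I pick either lift; write $\tilde h$ for that choice.

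Then I would verify the $\mathrm{BS}$ relation. Observe that $\tilde h\tilde f\tilde h^{-1}$ is a lift of $hfh^{-1}=f^n$, and $\tilde f^n$ is also a lift of $f^n$; two lifts of the same map over a connected cover differ by a deck transformation, so $\tilde h\tilde f\tilde h^{-1} = \sigma^\epsilon\circ \tilde f^n$ with $\epsilon\in\{0,1\}$. To rule out $\epsilon=1$ I would argue as in Lemma \ref{up}: $\tilde h\tilde f\tilde h^{-1}$ and $\tilde f^n$ are both isotopic to the identity on $\tilde S$ (the first because conjugating an identity-isotopy by any homeomorphism gives an identity-isotopy, the second because $\tilde f$ is), whereas $\sigma$ is a nontrivial deck transformation and so $\sigma\circ\tilde f^n$ is not isotopic to the identity (it acts nontrivially on a fiber, or on $\pi_1$); hence $\epsilon=0$. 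Alternatively, and perhaps more cleanly, evaluate both sides at the chosen preimage $\tilde x$ of a fixed point: $\tilde f(\tilde x)=\tilde x$ by construction, and $\tilde h\tilde x$ is a preimage of the $f$-fixed point $h^{-1}$-image, which one checks is again $\tilde f$-fixed, so both $\tilde h\tilde f\tilde h^{-1}$ and $\tilde f^n$ fix $\tilde h\tilde x$, forcing $\epsilon=0$ exactly as in the torus case of Lemma \ref{up}.

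Finally, the statement that $\tilde h$ is pseudo-Anosov with orientable invariant foliations: the induced foliation $(\tilde{\mathcal F}^s,\tilde\nu^s)$ is orientable by the defining property of the orientation cover, and $\tilde h$ stretches/contracts it with the same factor $\lambda$ since $p$ is a local homeomorphism away from the branch points and $p_*\tilde\nu^s=\nu^s$; the unstable foliation $\mathcal F^u$ of $h$, being transverse to $\mathcal F^s$ with the same singularities, also lifts, and its lift $\tilde{\mathcal F}^u$ is automatically orientable on the double cover because an orientation of $\tilde{\mathcal F}^s$ together with the transverse orientation coming from the fixed orientation of $\tilde S$ orients $\tilde{\mathcal F}^u$ as well (a monodromy argument: odd-pronged singularities are exactly where orientability of either foliation fails, and these have been unwrapped). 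So $\tilde h$ is pseudo-Anosov with stretch factor $\lambda$ and both invariant foliations orientable. I expect the main obstacle to be the careful bookkeeping of the two-lift ambiguity — making sure the chosen lifts of $f$ and $h$ interact correctly so that the deck-transformation correction term $\sigma^\epsilon$ in the $\mathrm{BS}$ relation genuinely vanishes — rather than any of the foliation-theoretic points, which are essentially formal once the covering is set up.
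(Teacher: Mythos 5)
Your overall architecture matches the paper's --- build the two lifts of each of $f$ and $h$ over the branched cover, make a choice, and rule out the deck-transformation correction $\sigma^{\epsilon}$ in the relation --- but you resolve the two-lift ambiguity differently, and that is where the arguments diverge. The paper simply declares $\tilde f$ and $\tilde h$ to be the lifts that \emph{preserve the orientation of} $\tilde{\mathcal F}^s$. With that convention the relation is immediate: $\tilde h\tilde f\tilde h^{-1}$ and $\tilde f^n$ are both orientation-preserving lifts of $f^n$ (a composition of orientation-preserving lifts preserves orientation, and $\tilde f^n$ does because $\tilde f$ does), and the orientation-preserving lift is unique, so $\epsilon=0$ with no further work. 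Your primary argument instead pins down $\tilde f$ by a fixed point or by lifting the isotopy, and rules out $\epsilon=1$ by isotopy classes: $\tilde h\tilde f\tilde h^{-1}\simeq\id\simeq\tilde f^n$ while $\sigma\tilde f^n\simeq\sigma\not\simeq\id$. This is workable, but it leans on two facts you only gesture at: (i) that $\tilde f\simeq\id$ --- note that $(f_t)$ does \emph{not} restrict to an isotopy of $S_0=S\setminus\Sigma_{\mathrm{odd}}$, since the intermediate maps $f_t$ need not preserve the singularities; one must first deform the isotopy to fix $\Sigma_{\mathrm{odd}}$ pointwise, which is possible because these points are $f$-fixed (Corollary \ref{fixh}) and contractible (Lemma \ref{fixcont}) --- and (ii) that $\sigma\not\simeq\id$, for which ``acts nontrivially on a fiber'' is not a proof; a clean justification is a Lefschetz count ($L(\sigma)=|B|\geq 0$ versus $L(\id)=\chi(\tilde S)<0$). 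The paper's convention exempts it from both points for the purpose of the relation (though it too asserts $\tilde f\simeq\id$ with the same light touch as you). The foliation-theoretic endgame (orienting $\tilde{\mathcal F}^u$ via transversality to the oriented $\tilde{\mathcal F}^s$) is identical in both.

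Your proposed ``cleaner'' alternative --- evaluating both sides at a preimage of a fixed point --- has a genuine gap and should be dropped. To imitate the torus case of Lemma \ref{up} you must exhibit one point where $\tilde h\tilde f\tilde h^{-1}$ and $\tilde f^n$ visibly agree. Evaluating at $\tilde h\tilde x$ requires both that $\tilde f$ fix $\tilde x$ (fine, by construction) and that $\tilde f^n$ fix $\tilde h\tilde x$; but $h(x)$ is only known to be fixed by $f^n$, not by $f$ (Lemma \ref{bsp} gives $h^{-1}(\fix f)\subset\fix f$, not $h(\fix f)\subset\fix f$), and even granting $h(x)\in\fix(f)$ you would still need your chosen lift $\tilde f$ to fix \emph{that particular preimage}. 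For a genuine cover this would follow from contractibility of fixed points, but here the cover is branched: a loop that is null-homotopic in $S$ can still have nontrivial $\Z/2$-monodromy in $S_0$ (a small loop around a branch point is the basic example), so ``$\gamma_y$ null-homotopic'' does not by itself imply that $\tilde f$ fixes both preimages of $y$. The orientation-preserving convention, or your isotopy-class argument properly fleshed out, avoids this entirely.
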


\begin{proof} Let $B$ be the set of branch points of $p$, so that $p|_{\tilde S \backslash B}:\tilde S \backslash B\to S\backslash p(B)$
is a covering map.  Then, by the lifting criterion any map $g:S \to S$ preserving $p(B)$ lifts to a map $\tilde g: \tilde S \to \tilde S$ (see Proposition 1.33 in \cite{hatcher}). Moreover, there are two possible lifts for such a map; one preserving the orientation of $\tilde{\cal F}^s$ and
the other one reversing it.  For any such map $g:S \to S$, we will call $\tilde g$ (and $-\tilde g$) to the lift of $g$ preserving (resp. reversing) the orientation of $\tilde{\cal F}^s$.
Recall that $p(B)$ are the singularities of ${\cal F}^s$ with an odd number of prongs. So clearly $h$ preserves $p(B)$.  Moreover, corollary \ref{fixh} tells us that $f$ also preserves $p(B)$.  Notice that $\tilde h \tilde f \tilde h ^{-1}$ and $\tilde f ^n$ are both lifts of the same map $f^n$.  So, either
$\tilde h \tilde f \tilde h ^{-1}=\tilde f ^n$, or $\tilde h \tilde f \tilde h ^{-1}=- \tilde {f ^n}$.  As $\tilde f$ preserves the orientation of $\tilde{\cal F}^s$, so does $\tilde f ^n$, and so $\tilde f ^n$ is the lift of $f^n$
preserving orientation of $\tilde{\cal F}^s$. On the other hand, as both $\tilde h$ and $\tilde f$ are the orientation preserving lifts, $\tilde h \tilde f \tilde h ^{-1}$ preserves the orientation of $\tilde{\cal F}^s$.  So,
$\tilde h \tilde f \tilde h ^{-1}=\tilde f ^n$, as we wanted.

Note that $\tilde f$ is isotopic to the identity because $f$ is.  To see that $\tilde h$ is pseudo-Anosov  with oriented measured foliations, note that the unstable measured foliation $({\cal F}^u, \nu ^u)$ lifts to a singular
measured foliation $(\tilde{\cal F}^u,\tilde \nu ^u)$ on $\tilde S$ that is transverse to $\tilde{\cal F}^s$. So, we can use the orientation on $\tilde{\cal F}^s$ to induce an orientation on $\tilde{\cal F}^u$ by imposing,
for instance, that the stable foliation always crosses the unstable foliation locally from left to right.

\end{proof}

The preceeding lemma allows us to restrict ourselves to the case where both measured foliations of $h$ are oriented. Note that in the case where $S = \T ^2$ and $h$ is an Anosov homeomorphism, this assumption is fulfilled. We devote the next section to proving our results in this setting.

\begin{section}{The orientable case}

Throughout this section, we will assume that the measured foliations of $h$ are globally oriented. We say that an arc $\gamma : I \to S$ is positively transverse to an oriented foliation ${\cal F}$ on $S$ if for any $t_0 \in I$ there exists a neighborhood $U (\gamma (t_0))$ in $S$ and an orientation preserving homeomorphism $g$ between $U$ and an open set $V\subset \R ^2$ sending the foliation ${\cal F}$ onto the vertical foliation, oriented with increasing $y$ coordinate, such that the mapping $t \to p_1 (g(\gamma (t)))$ is strictly increasing in a neighborhood of $t_0$, where $p_1$ denotes projection over the first coordinate.

In this case, we can define signed transverse measures $\hat \nu^s$ and $\hat \nu ^u$, assigning positive measure to positively transverse arcs, and negative measures to negatively transverse arcs. This implies that one has the nice property of additivity, that is $\hat \nu  (\alpha \cdot \beta) = \hat \nu  (\alpha ) + \hat \nu (\beta)$, where $\hat \nu$ is any of the transverse measures.

These signed transverse measures can be extended to any arc in the same fashion as we did for the positive ones $\nu^s$ and $\nu^u$. In this case $\hat \nu(\gamma)$ is invariant under homotopy with fixed endpoints, so no infimum needs to be taken. Moreover, it can be shown that $\hat\nu$ comes from integration of a closed $1$-form on $S$.

Notice that equations \ref{eq1} and \ref{eq2} hold for $\hat\nu^s$ and $\hat\nu^u$, and in both cases $|\hat\nu(\gamma)|\leq \nu(\gamma)$.

We define the functions $\varphi _s , \varphi _u : S \to \R$ by the equations $$\varphi _s (x) = \hat \nu ^s (\gamma_x), \ \varphi _u (x) = \hat \nu ^u (\gamma_x).$$\noindent Notice that $\varphi _s$ and $\varphi _u$ are continuous (thus bounded) by continuity of the stable and unstable foliations of $h$. By Corollary \ref{arcs}, together with additivity, we get that:

\begin{equation}\label{ecu1}\lambda ^m \varphi_s (h^{-m}(x)) = \hat \nu ^s (\gamma _x ^{n^m}) = \sum _{k=0}^{n^m-1} \varphi _s (f^k (x)). \end{equation}

\noindent Analogously,

\begin{equation}\label{ecu2}  \lambda ^{-m} \varphi _u (h^{-m}(x)) = \hat \nu ^u (\gamma _x ^{n^m}) = \sum _{k=0}^{n^m-1} \varphi _u (f^k (x)).\end{equation}

%
% \begin{lema}\label{int0} If $\mu$ is an ergodic $f$- invariant measure, then $\int \varphi _u d \mu = \int \varphi _s d \mu = 0$.
% \end{lema}
%
% \begin{proof}  By Birkhoff's ergodic theorem we have that $\int \varphi _u d \mu  = \lim_{m\to \infty} \frac{1}{n^m} \sum _{k=0}^{n^m-1} \varphi _u (f^k (x))$. By equation \ref{ecu1}
% $$\int \varphi _u d \mu  = \lim_{m\to \infty} \frac{1}{n^m \lambda ^m} \varphi _ u (h^{-m}(x)) = 0$$ \noindent since $\varphi _u$ is bounded.
%
%
%
%
% It remains to prove $\int \varphi _s d \mu = 0$ in the case $\lambda < n$ (see Lemma \ref{nus0}).  We obtain:
%
% $$\int \varphi _s d\mu = \lim_{m\to \infty} \frac{1}{n^m} \sum _{k=0}^{n^m-1} \varphi _s (f^k (x)) = \lim_{m\to \infty}\left(\frac{\lambda}{n}\right)^m \varphi _ s (h^{-m}(x)) = 0. $$
% \end{proof}

\begin{lema}\label{fijos} If $ \nu ^s (\gamma _x ) = 0$ for all $x\in S$, then $x\in \fix (f)$ if and only if $\varphi _u (x) = 0$.
\end{lema}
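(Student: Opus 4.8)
\textbf{Proof proposal for Lemma \ref{fijos}.}

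The plan is to prove the two implications separately, using the structural description of $f$ provided by the hypothesis $\nu^s(\gamma_x)=0$ for all $x$, namely Lemma \ref{fixFs}: under this hypothesis $f$ preserves every leaf and semi-leaf of $\mathcal{F}^s$. The forward implication is the routine one. If $x\in\fix(f)$, then $\gamma_x$ is a loop, and by Lemma \ref{fixcont} every $f$-fixed point is contractible, so $\gamma_x$ is null-homotopic. Since $\hat\nu^u$ is a homotopy invariant (with fixed endpoints) that comes from integrating a closed $1$-form, it vanishes on null-homotopic loops; hence $\varphi_u(x)=\hat\nu^u(\gamma_x)=0$.

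For the converse, suppose $\varphi_u(x)=0$. The key point is that the trajectory arc $\gamma_x$ runs from $x$ to $f(x)$, and since $f$ preserves the leaf of $\mathcal{F}^s$ through $x$ (Lemma \ref{fixFs}), the endpoints $x$ and $f(x)$ lie on the same stable leaf. So $\gamma_x$ is an arc with both endpoints on a single leaf of $\mathcal{F}^s$. An arc with $\hat\nu^u$-measure zero and endpoints on the same stable leaf should be homotopic rel endpoints into that stable leaf: intuitively, $\hat\nu^u$ measures ``horizontal displacement'' across stable leaves, and zero signed unstable measure together with the endpoints being on the same leaf forces the arc to be homotopic to a sub-arc of the stable leaf. (More carefully: lift to the universal cover; $\tilde f$ preserves each leaf of $\tilde{\mathcal F}^s$, which is an oriented line; the unstable measure $\hat\nu^u$ restricted to a single stable leaf is a monotone coordinate along that leaf, and $\hat\nu^u(\gamma_x)=0$ says $\tilde x$ and $\tilde f(\tilde x)$ have the same value of this coordinate; since the foliation is oriented and $\hat\nu^u$ is nondegenerate along the leaf, this forces $\tilde f(\tilde x)=\tilde x$.) Projecting back, $f(x)=x$, i.e. $x\in\fix(f)$.

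The main obstacle I expect is making the converse argument fully rigorous: turning ``$\hat\nu^u$ vanishes on an arc with endpoints on a common stable leaf'' into ``the endpoints coincide.'' This requires knowing that $\hat\nu^u$ restricts to a genuinely monotone (strictly increasing) parametrization along each leaf of $\mathcal{F}^s$, which uses transversality of $\mathcal{F}^u$ to $\mathcal{F}^s$ together with orientability, and one must handle the singular points of $\mathcal{F}^s$ with care (a stable leaf may pass through a singularity, and at a singularity the local picture has several prongs). One clean way around the singularities is to invoke Lemma \ref{fixFs} again: $f$ fixes every singularity of $\mathcal{F}^s$, so it suffices to treat $x$ in a regular leaf, where the leaf is an embedded line (in the universal cover) on which $\hat\nu^u$ is an honest strictly monotone coordinate, and $\varphi_u(x)=0$ immediately gives $\tilde f(\tilde x)=\tilde x$. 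The remaining care is to check that $\hat\nu^u$ assigns nonzero measure to every nontrivial sub-arc of a regular stable leaf, which follows because such an arc is transverse to $\mathcal{F}^u$ and, not being homotopic rel endpoints to a leaf segment of $\mathcal{F}^u$, has $\nu^u>0$ by Lemma \ref{pap} item 2, and on a transverse arc $|\hat\nu^u|=\nu^u$.
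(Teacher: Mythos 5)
Your proposal is correct and follows essentially the same route as the paper: the forward implication via contractibility of fixed points (Lemma \ref{fixcont}), and the converse by placing $x$ and $f(x)$ on a common stable (semi-)leaf via Lemma \ref{fixFs} and using that $|\hat\nu^u|=\nu^u$ is nondegenerate on stable leaf segments. The paper phrases the converse more tersely by taking the leaf segment $\alpha_x$ from $x$ to $f(x)$ and noting $\hat\nu^u(\alpha_x)=0$ iff $\alpha_x$ is a point; your universal-cover/monotone-coordinate formulation is just a more explicit version of the same argument.
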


\begin{proof} By Lemma \ref{fixFs}, $f$ preserves each semi-leaf of ${\cal F}^s$. For $x\in S$, let $\alpha_x$ be a stable semi-leaf segment between $x$ and $f(x)$. Then $\alpha_x$ is transverse to ${\cal F}^u$, and we have that
$|\hat \nu^u(\alpha_x)|=\nu^u(\alpha_x)$ and $\hat \nu^u(\alpha_x)=0$ if and only if $\alpha_x$ is a point, i.e. $x\in\fix(f)$.
If $x\in \fix (f)$, then $x$ is contractible, wich imples $\varphi _u (x) = 0$ as $\gamma_x$ is homotopically trivial (see Lemmas \ref{fixcont} and \ref{pap} item 1).
\end{proof}

%\begin{proof} If $x\in \fix (f)$, then $\gamma _x$ is a trivial loop by Lemma \ref{fixcont}.  So, $\varphi_s (x) = \varphi _u (x) = 0$.

%Now, assume $\varphi_s (x) = \varphi _u (x) = 0$. As $\varphi_s (x) =  0$, we have that $f(x)\in {\cal F}^s (x)$ and $(f_t (x))_{t\in [0,1]}$ is homotopic to a curve $\gamma_s$ contained in ${\cal F}^s (x)$ with fixed endpoints. On the other hand, as  $\varphi _u (x) = 0$, we have that $f(x)\in {\cal F}^u (x)$ and $(f_t (x))_{t\in [0,1]}$ is homotopic to a curve $\gamma_u$ contained in ${\cal F}^u (x)$ with fixed endpoints. Note that $\gamma_s \cdot \gamma _u ^{-1}$ is nullhomotopic as both arcs $\gamma_s$ and $\gamma _u$ are homotopic to $(f_t (x))_{t\in [0,1]}$. If $f(x)\neq x$ then the lifts of ${\cal F}^s (x)$ and ${\cal F}^u (x)$ through a lift $\tilde x$ of $x$ intersect at some other point than $\tilde x$, a contradiction.

%\end{proof}

\begin{lema}\label{sign} If $ \nu ^s (\gamma _x ) = 0$ for all $x\in S$, then the sign of the real-valued function $\varphi _u$ is constant.

\end{lema}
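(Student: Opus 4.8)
The plan is to exploit that, under the standing hypothesis $\nu^s(\gamma_x)=0$ for all $x$, the isotopy $(f_t)$ moves every point along a leaf of the \emph{oriented} stable foliation, so that $\varphi_u(x)=\hat\nu^u(\gamma_x)$ is nonzero exactly when $f$ displaces $x$ effectively, its sign recording the direction of that displacement along the leaf. I would show this direction is the same for all points and then conclude using topological transitivity of $h$.

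The first — and main — step is to prove that $f$ preserves the orientation of each leaf of $\mathcal F^s$. By Lemma \ref{fixFs}, $f$ preserves each leaf, so the canonical lift $\tilde f$ preserves each leaf of the lifted foliation $\tilde{\mathcal F}^s$ on $\tilde S$. Since $f$ is isotopic to the identity, $\tilde f$ commutes with the deck group, so its displacement function is deck-invariant and hence bounded (as $S$ is compact); therefore $\tilde f$ extends as the identity to the boundary at infinity of $\tilde S$ (the circle $\partial\D$ if $S$ is hyperbolic, the circle at infinity of $\R^2$ if $S=\T^2$). Each leaf of $\tilde{\mathcal F}^s$ is a properly embedded line with two distinct endpoints on this boundary (there are no closed leaves, by Lemma \ref{pap}(1)); since $\tilde f$ fixes those two endpoints and maps the leaf to itself, it cannot interchange its two ends, so it preserves an orientation of the leaf. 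Projecting to $S$ gives the claim.

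It follows that for $x\notin\fix(f)$ the forward orbit $(f^k(x))_{k\ge 0}$ lies on the stable leaf $L$ of $x$ and is strictly monotone for the order that $L$ inherits from its orientation; in particular no $f^k(x)$ is fixed, and all the segments of $L$ between $f^k(x)$ and $f^{k+1}(x)$ are traversed in the same direction. Such a segment is transverse to $\mathcal F^u$, consistently oriented with respect to it along $L$, so $\hat\nu^u$ of it is nonzero with sign independent of $k$; and since $\nu^s(\gamma_{f^k(x)})=0$, Lemma \ref{pap}(2) identifies $\gamma_{f^k(x)}$ with this segment up to homotopy rel endpoints, whence $\varphi_u(f^k(x))$ is nonzero with sign independent of $k$, equal to that of $\varphi_u(x)$. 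Inserting this into equation (\ref{ecu2}), $\lambda^{-m}\varphi_u(h^{-m}(x))=\sum_{k=0}^{n^m-1}\varphi_u(f^k(x))$, the right-hand side is a sum of nonzero terms all of the sign of $\varphi_u(x)$, so $\varphi_u(h^{-m}(x))$ has that same sign for every $m\ge 0$. Using Lemma \ref{bsp}(2) (so that $h^m(x)\notin\fix(f)$ when $x\notin\fix(f)$, allowing the last identity to be applied to $h^m(x)$ as well), one gets that $\operatorname{sign}\varphi_u$ is constant along the entire $h$-orbit of any point outside $\fix(f)$; hence $U^+:=\{\varphi_u>0\}$, $U^-:=\{\varphi_u<0\}$ and their complement $\fix(f)=\{\varphi_u=0\}$ (Lemma \ref{fijos}) are all $h$-invariant.

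To finish: if $\fix(f)=S$ then $f=\id$ and there is nothing to prove; otherwise $\fix(f)$ is a closed, $h$-invariant proper subset of $S$, so a point $z$ with dense $h$-orbit (Lemma \ref{pap}(5)) cannot lie in $\fix(f)$ — say $\varphi_u(z)>0$. Then the dense orbit of $z$ lies in $U^+$, so $U^+$ is dense, and since $U^-$ is open and disjoint from $U^+$ it must be empty; thus $\varphi_u\ge 0$ on $S$ and its sign is constant. The delicate point is the orientation-preservation step: without it the Birkhoff-type sum in (\ref{ecu2}) could contain terms of opposite sign and the sign of $\varphi_u(h^{-m}(x))$ would not be controlled, so one must make sure that the stable leaves really do lift to properly embedded lines with well-defined, distinct ends in both the hyperbolic and the toral settings, and that the bounded-displacement lift fixes those ends.
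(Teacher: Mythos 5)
Your proof is correct, but it follows a genuinely different route from the paper's. The paper argues by contradiction in three lines: if $U=\{\varphi_u>0\}$ and $V=\{\varphi_u<0\}$ were both non-empty, a small ball $B\subset U$ would, under forward iteration by $h$, stretch along unstable leaves until $h^m(B)$ is $\epsilon$-dense, hence meets both $U$ and $V$, hence (being connected) meets $S\setminus(U\cup V)=\fix(f)$; pulling back with $h^{-1}(\fix(f))\subset\fix(f)$ (Lemma \ref{bsp}) forces $B\cap\fix(f)\neq\emptyset$, a contradiction. The only inputs are Lemma \ref{fijos} and the density of iterated unstable segments. You instead prove more: that $f$ preserves the orientation of each stable leaf, hence that $\operatorname{sign}\varphi_u$ is constant along each forward $f$-orbit, hence (via the Birkhoff identity \eqref{ecu2} and Lemma \ref{bsp}) along each $h$-orbit, so that $\{\varphi_u>0\}$ and $\{\varphi_u<0\}$ are $h$-invariant and a dense $h$-orbit kills one of them. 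This buys you genuinely stronger structural information ($h$-invariance of the sign sets, monotone displacement of each non-fixed point along its stable leaf), at the cost of the extra input you yourself flag: that lifted stable leaves are properly embedded lines with two \emph{distinct} ideal endpoints fixed by the boundary extension of $\tilde f$. That fact is standard for pseudo-Anosov foliations (and immediate for Anosov maps of $\T^2$), and the paper already invokes the analogous statement for semi-leaves at singularities in Lemma \ref{fixFs}, so your argument is sound; note also that in the setting of this section one can get leafwise orientation-preservation more cheaply, since the lift of $f$ to the orientation cover was chosen to preserve the global orientation of $\tilde{\cal F}^s$ and, by Lemma \ref{fixFs}, preserves each leaf. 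The paper's proof remains the more economical of the two.
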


\begin{proof} Let $U = \varphi _ u ^{-1} ((0, +\infty))$ and $V = \varphi _ u ^{-1} ((-\infty, 0))$. Suppose that $\varphi_u$ changes sign, so that $U$ and $V$ are both non-empty.  Observe they are disjoint open sets with boundaries
$\partial U = \partial V = \fix (f)$ as Lemma \ref{fijos} gives $\fix (f) = \varphi _u ^{-1} (0)$.
Take $x\in U$ and $B= B (x, \epsilon)\subset U$.  Then, for all $m\geq 0$,  $h^m (B)$ contains a leaf segment of the unstable foliation of length $\lambda ^m \epsilon$.  Moreover, for sufficiently big $m$ $h^m (B)$ intersects both
$U$ and $V$ because the $h$- iterates of unstable leaf segments accumulate all over $S$. As $h^m (B)$ is connected, we obtain $h^m (B)\cap \fix (f)\neq \emptyset$. Now, recall that $\fix (f)$ is $h^{-1}$- invariant
(Lemma \ref{bsp} item 2.).  Then, $B\cap \fix (f)\neq \emptyset$, a contradiction, as $B\subset U$.

\end{proof}

We are now ready to prove Theorem \ref{teo1} in the orientable case:

\begin{proof} If $\lambda>n$, Lemma \ref{nus0} gives us $ \nu ^s (\gamma _x ) = 0$ for all $x\in S$. Now, by Lemma \ref{fijos}, $\fix (f) = \varphi _u ^{-1} (0)$. Therefore,  we have to prove $\varphi _u \equiv 0$.  By Lemma \ref{sign}, we may assume $\varphi _u \geq 0$ (the case $\varphi _u \leq 0$ is analogous).  Then,
for all $x\in S$ the series $\sum _{k=0}^\infty \varphi _u (f^k(x))$ is of positive terms
and either converges or diverges. Its limit can be computed as the limit of any subsequence of partial sums.  In particular, by equation \ref{ecu2},$$\sum _{k=0}^\infty \varphi _u (f^k(x)) = \lim _{m\to \infty} \sum _{k=0}^{n^m -1} \varphi _u (f^k(x)) =
\lim_{m\to \infty} \lambda ^{-m} \varphi _ u (h^{-m} (x)) = 0.$$\noindent Then, every term must be zero, that is $\varphi _ u \equiv 0$, proving the theorem.

\end{proof}

\begin{cor}\label{pimba} If $ \nu ^s (\gamma _x ) = 0$ for all $x\in S$, then $f= \id$.

\end{cor}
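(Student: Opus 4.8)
The plan is to reduce the general (possibly non-orientable) case to the orientable case that was just treated in the proof of Theorem \ref{teo1}. The hypothesis $\nu^s(\gamma_x)=0$ for all $x\in S$ is exactly the input needed to run Lemma \ref{fixFs}, Corollary \ref{fixh}, and the lifting lemma that produces the orientation cover $p\colon \tilde S\to S$ for ${\cal F}^s$. So the first step is to invoke that lifting lemma: since $f$ preserves the singularities of ${\cal F}^s$ with an odd number of prongs (Corollary \ref{fixh}), the whole action $\langle f,h\rangle$ lifts to an action $\langle \tilde f,\tilde h\rangle$ on $\tilde S$ with $\tilde h$ pseudo-Anosov, both its invariant foliations orientable, $\tilde f$ isotopic to the identity, and the $\bs$ equation preserved.

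The second step is to check that the hypothesis $\nu^s\equiv 0$ on trajectory arcs passes to the cover. This is immediate from $p_*(\tilde\nu^s)=\nu^s$: for $\tilde x\in\tilde S$ the arc $\tilde\gamma_{\tilde x}$ coming from the lifted isotopy projects to $\gamma_{p(\tilde x)}$, and since $\tilde\nu^s$ measures the lifted arc by the same subadditive-infimum recipe applied upstairs, one gets $\tilde\nu^s(\tilde\gamma_{\tilde x})\le \nu^s(\gamma_{p(\tilde x)})=0$ (in fact equality, but the inequality suffices). Hence the lifted action on $\tilde S$ satisfies all the standing hypotheses of the orientable section, \emph{including} $\tilde\nu^s(\tilde\gamma_{\tilde x})=0$ for all $\tilde x$, which is precisely the hypothesis under which Lemmas \ref{fijos}, \ref{sign} and the final argument in the proof of Theorem \ref{teo1} were carried out.

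The third step is to apply that final argument verbatim to $\langle\tilde f,\tilde h\rangle$: from $\tilde\nu^s\equiv 0$ on trajectories, Lemma \ref{fijos} gives $\fix(\tilde f)=\tilde\varphi_u^{-1}(0)$, Lemma \ref{sign} gives that $\tilde\varphi_u$ has constant sign, and then equation \ref{ecu2} applied to $\tilde h$ forces $\sum_{k\ge 0}\tilde\varphi_u(\tilde f^k(\tilde x))=\lim_m \lambda^{-m}\tilde\varphi_u(\tilde h^{-m}(\tilde x))=0$, so $\tilde\varphi_u\equiv 0$ and $\tilde f=\id$. Projecting by $p$ yields $f=\id$. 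Note that nowhere in this chain did we use $\lambda>n$: that hypothesis entered Theorem \ref{teo1} only through Lemma \ref{nus0}, whose conclusion $\nu^s(\gamma_x)=0$ is now assumed outright; so the corollary is genuinely a statement about actions satisfying that vanishing condition, whatever its origin.

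The one point requiring a little care — and the place I expect the only real friction — is making sure that the extended signed measures $\hat\nu^s,\hat\nu^u$ and the functions $\tilde\varphi_s,\tilde\varphi_u$ on $\tilde S$ are the ones coming from the oriented lifted foliations, so that additivity (not just subadditivity) holds and equations \ref{ecu1}–\ref{ecu2} are available; but this is exactly what the orientation cover was built to provide, and the lifting lemma already records that $\tilde h$ is pseudo-Anosov with oriented foliations, so there is nothing new to prove. Everything else is a transcription of the argument in the orientable section with tildes added.
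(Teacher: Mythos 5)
Your proposal is correct and follows essentially the same route as the paper: the paper's proof of Corollary \ref{pimba} is precisely the observation that the argument for Theorem \ref{teo1} used $\lambda>n$ only through Lemma \ref{nus0} to obtain $\nu^s(\gamma_x)\equiv 0$, after which Lemmas \ref{fijos}, \ref{sign} and equation \ref{ecu2} finish the job. Your extra step of passing to the orientation cover is the same reduction the paper performs in the lemma preceding the orientable section (and is not needed when the corollary is applied, since for an Anosov map on $\T^2$ the foliations are already orientable), so you have merely made explicit what the paper leaves as a standing assumption.
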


\begin{proof}  Note that in the previous proof we only needed $ \nu ^s (\gamma _x ) \equiv 0$ to conclude.

\end{proof}

\end{section}

\begin{section} {The conservative case in the torus}

In this section we prove Theorem \ref{toro}.  This result is a consequence of the work of A. Koropecki and F. Tal in \cite{korotal}.

We say that a closed subset $K\subset T ^2$ is {\it fully essential} if any loop in $\T ^2 \backslash K$ is null- homotopic.
 % $i_*( H_1 (\T ^2 \backslash K, \Z)) = 0$, where $i_*: H_1 (\T ^2 \backslash K, \Z)\to H_1(\T^2, \Z)$ is the induced  % map on homology of the inlcusion
%$\T ^2 \backslash K\hookrightarrow \T ^2$.

\begin{teo}[Theorem A in \cite{korotal}]\label{kt} Let $f: \T ^2 \to \T ^2$ be an irrotational homeomorphism preserving a Borel probability measure of full support, and let $\tilde f$ be its irrotational lift. Then
one of the following holds:

\begin{enumerate}
 \item $\fix (f)$ is fully essential;
 \item Every point in $\R ^2$ has bounded $\tilde f$- orbit;
 \item $\tilde f$ has uniformly bounded displacement in a rational direction; i.e. there is
a nonzero $v\in \Z ^2$ and $M>0$ such that
 $$|<\tilde f ^n (z)-z, v>|\leq M$$\noindent for all $z\in \R ^2$ and $n\in \Z$.
\end{enumerate}

\end{teo}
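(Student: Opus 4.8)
The plan is to place $f$ inside the framework of \emph{strictly toral dynamics} and to exploit the tension between the purely topological information (the essential/inessential structure of $\fix(f)$) and the ergodic information coming from the two standing hypotheses. Because $f$ preserves a Borel probability measure $\mu$ of full support, Poincar\'e recurrence gives that $\mu$-almost every point is recurrent; and because $\tilde f$ is the \emph{irrotational} lift, the displacement cocycle $z \mapsto \tilde f(z) - z$ descends to a map $\T^2 \to \R^2$ whose $\mu$-average is the rotation vector $\{(0,0)\}$, hence zero. Atkinson's lemma then applies to this zero-mean cocycle: for $\mu$-a.e. $\tilde z$ the partial sums $\tilde f^n(\tilde z) - \tilde z$ return within any $\epsilon$ of the origin for infinitely many $n$. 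This recurrence-in-the-cover is the engine that forbids coherent drift, and it is what must ultimately be promoted to a uniform statement in cases (2) and (3).

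First I would dispose of the topological alternative. If $\fix(f)$ is fully essential we are in case (1), so I assume it is not; then $\T^2 \setminus \fix(f)$ carries an essential loop. The strategy is then to run the essential/inessential dichotomy of Koropecki--Tal on the dynamics itself: either $f$ is \emph{strictly toral} (no invariant inessential open set captures the nonwandering dynamics and there is no invariant essential annular continuum), or $f$ admits an essential annular reduction. These two outcomes are exactly what will produce cases (2) and (3) respectively. The work is therefore to show (a) that a strictly toral \emph{irrotational} map has uniformly bounded orbits in $\R^2$, and (b) that the presence of an invariant essential annular continuum yields a rational class $v \in \Z^2$ in which displacement is uniformly bounded.

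For (b) I would use that an essential annular continuum in $\T^2$ has a well-defined primitive homology direction, dual to some nonzero $v \in \Z^2$. Lifting to $\R^2$, the boundary of the annular region separates the plane into sides along the $v$-direction, and an orbit that advanced an unbounded amount in $\langle\,\cdot\,,v\rangle$ would cross infinitely many translates of this separating continuum, producing a nonzero rotation component in the direction $v$ and contradicting irrotationality. Combined with the recurrence from Atkinson's lemma on the full-support set, this pins $\langle \tilde f^n(z)-z,\,v\rangle$ to a bounded window, giving the estimate $|\langle \tilde f^n(z)-z,\,v\rangle|\le M$ of case (3). For (a), the strictly toral case, I would invoke a uniform boundedness-of-deviations argument: with no essential annular structure and no inessential reduction, the zero rotation vector forces the lifted orbits to stay in a compact region, which is case (2).

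The main obstacle I expect is precisely the passage from the $\mu$-almost-everywhere recurrence furnished by Atkinson's lemma to the \emph{uniform} topological bounds demanded by cases (2) and (3), which must hold for \emph{every} $z\in\R^2$ and \emph{every} $n\in\Z$, with no exceptional set. Closing this gap is where the full-support hypothesis is indispensable, together with a semicontinuity/compactness argument showing that the complementary inessential continua of the annular region cannot be crossed. The genuinely delicate ingredients here are the prime-ends rotation theory applied to the boundary continua and the verification that the annular reduction is truly $f$-invariant rather than merely invariant up to isotopy; it is in controlling the dynamics on these inessential ``filling'' pieces of $\fix(f)$, using that the rotation set is the single point $\{(0,0)\}$, that the heart of the proof lies.
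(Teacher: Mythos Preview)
The paper does not prove this statement at all: it is quoted verbatim as Theorem~A of Koropecki--Tal \cite{korotal} and used as a black box in the proof of Theorem~\ref{toro}. There is therefore no ``paper's own proof'' to compare your proposal against.

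For what it is worth, your sketch is broadly in the spirit of the Koropecki--Tal program (the inessential/annular/strictly toral trichotomy, Atkinson's lemma applied to the displacement cocycle, and prime-ends arguments to upgrade almost-everywhere recurrence to uniform bounds). But reproducing that argument is a substantial paper in its own right, and the gaps you yourself flag---promoting $\mu$-a.e.\ recurrence to the uniform statements in (2) and (3), and controlling the dynamics on the filling pieces---are exactly the hard parts of \cite{korotal}. Within the present paper the correct ``proof'' of this theorem is simply the citation.
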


We assume in this section that $h : \T^2 \to T ^2$ is an Anosov homeomorphism,  $f : \T^2 \to T ^2$ is isotopic to the identity preserving a Borel probability measure of full support, and $hfh ^{-1} = f ^n$ for some $n\geq 2$.

\begin{lema}\label{unacomp} $T^2 \backslash \fix (f)$ is connected and $h^{-1}$ - invariant.

\end{lema}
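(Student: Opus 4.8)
The plan is to establish the two assertions separately. The $h^{-1}$-invariance requires no new work: item~2 of Lemma~\ref{bsp} says $h^{-1}(\fix(f))\subseteq\fix(f)$, which is equivalent to $h(\T^2\setminus\fix(f))\subseteq\T^2\setminus\fix(f)$, that is, to the invariance we want. So the substance of the lemma is the connectedness of $U:=\T^2\setminus\fix(f)$, and here I would argue by contradiction, using crucially that $h$ is Anosov and that, by the invariance just obtained, the forward $h$-iterates of any subset of $U$ stay in $U$.

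We may assume $f\neq\id$ (otherwise the statement is trivial), so that $U$ is a nonempty open set. Suppose, toward a contradiction, that $U=A\sqcup B$ with $A,B$ open, nonempty and disjoint. I would first fix a small ball $B_0\subseteq A$ and, inside $B_0$, a nondegenerate leaf segment $\ell$ of the unstable foliation $\mathcal{F}^u$ (legitimate because $h$ is Anosov, so $\mathcal{F}^u$ has no singularities and is a genuine foliation). Since $h$ carries leaf segments of $\mathcal{F}^u$ to leaf segments of $\mathcal{F}^u$, and since $\ell\subseteq U$ and $h(U)\subseteq U$, every iterate $h^m(\ell)$ is again an unstable leaf segment contained in $U$; being connected and contained in $U=A\sqcup B$, each $h^m(\ell)$ lies entirely in $A$ or entirely in $B$.

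To conclude I would invoke the density of iterated unstable segments: pick $\epsilon>0$ small enough that both $A$ and $B$ contain an $\epsilon$-ball, and apply item~3 of Lemma~\ref{pap} to get $m$ with $h^m(\ell)$ $\epsilon$-dense in $\T^2$. Then $h^m(\ell)$ meets both of these $\epsilon$-balls, hence meets both $A$ and $B$, contradicting the previous paragraph; therefore $U$ is connected. The one point to watch — and essentially the same mechanism already used in the proof of Lemma~\ref{sign} — is to keep the direction of the invariance straight, so that the segments $h^m(\ell)$ really do remain inside $U$; once that is pinned down the $\epsilon$-density statement closes the argument immediately, and I do not expect any further obstacle.
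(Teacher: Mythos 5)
Your proof is correct and follows essentially the same route as the paper: the invariance is read off from Lemma \ref{bsp} item 2, and connectedness is obtained by pushing an unstable leaf segment of $U$ forward until its iterates are $\epsilon$-dense (Lemma \ref{pap} item 3), so that a connected set contained in $U$ would have to meet both pieces of a putative separation. The paper phrases the contradiction via $h^{-m}(\fix(f))\subset\fix(f)$ rather than $h(U)\subset U$, but these are the same observation.
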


\begin{proof} By Lemma \ref{bsp} item 2, $T^2 \backslash \fix (f)$ is $h^{-1}$ invariant.  Suppose that $T^2 \backslash \fix (f)$ has two different connected components $U$ and $V$ and take a segment of unstable leaf $\delta\subset U$.  Then, if $m$ is big enough $h^m(\delta)\cap \fix(f)\neq \emptyset$
because $h^m (\delta)$ is connected and intersects both $U$ and $V$.  But then, $\delta \cap \fix(f)\neq \emptyset$, which contradicts that $\delta\subset U\subset T^2 \backslash \fix (f)$.
\end{proof}

\begin{lema}\label{fixess} $T^2 \backslash \fix (f)$ contains an essential simple closed curve.

\end{lema}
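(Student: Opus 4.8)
The plan is to argue by contradiction. Assume $f\neq\id$, so that $U:=\T^2\setminus\fix(f)$ is a nonempty open set, which is connected by Lemma \ref{unacomp}; and suppose $U$ contains no essential simple closed curve. The first point is that this hypothesis forces $U$ to be \emph{inessential}, i.e.\ every loop in $U$ is null-homotopic in $\T^2$: if some loop in $U$ were essential, a standard resolution of self-intersections would produce an essential simple closed curve inside $U$. I would then invoke the basic structure of inessential open connected subsets of the torus (cf.\ \cite{korotal}): a connected component $\tilde U$ of $\pi^{-1}(U)$, where $\pi:\R^2\to\T^2$ is the universal covering, is bounded, $\pi|_{\tilde U}:\tilde U\to U$ is a homeomorphism, and the components of $\pi^{-1}(U)$ are exactly the integer translates $\tilde U+v$, $v\in\Z^2$, pairwise disjoint.

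The dynamical input is that $h(U)\subset U$, which follows from Lemma \ref{bsp}(2) (equivalently $h^{-1}(\fix f)\subset\fix f$, as already used in Lemma \ref{unacomp}); hence $h^m(U)\subset U$ for all $m\geq 0$. Fix a lift $\tilde h$ of $h$. Since $\tilde h^m(\tilde U)$ is connected and $\pi(\tilde h^m(\tilde U))=h^m(U)\subset U$, it is contained in a single component of $\pi^{-1}(U)$, say $\tilde h^m(\tilde U)\subset\tilde U+v_m$; in particular $\mathrm{diam}(\tilde h^m(\tilde U))\leq\mathrm{diam}(\tilde U)=:R<\infty$ for every $m$.

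The contradiction comes from the expansion of transverse measure along unstable leaves. On $\T^2$ the stable and unstable foliations of $h$ are orientable, so I may use the signed transverse measure $\hat\nu^s$, which is the integral of a closed $1$-form $\omega_s$ on $\T^2$. Since ${\cal F}^s$ is a nonsingular foliation, $\omega_s$ vanishes nowhere, so its de Rham class $[\omega_s]=(a,b)$ is nonzero (an exact $\omega_s=dg$ would vanish at a critical point of $g$), and we may write $\omega_s=a\,dx+b\,dy+dg$. Now pick $\tilde p\neq\tilde q$ in $\tilde U$ lying on a common leaf of the lifted unstable foliation, let $\tilde l\subset\tilde U$ be the unstable leaf segment joining them and $l$ its projection to $\T^2$. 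Since $l$ is transverse to ${\cal F}^s$ we have $\nu^s(l)>0$, and iterating (\ref{eq1}) gives $|\hat\nu^s(\tilde h^m(\tilde l))|=\lambda^m\nu^s(l)$. Integrating the lifted form $\pi^*\omega_s=a\,dx+b\,dy+d\tilde g$, with $\tilde g$ the bounded periodic lift of $g$, along $\tilde h^m(\tilde l)$, and writing $\tilde h^m(\tilde q)-\tilde h^m(\tilde p)=(\Delta x_m,\Delta y_m)$, one gets $|a\Delta x_m+b\Delta y_m|\geq \lambda^m\nu^s(l)-2\|\tilde g\|_\infty$, which tends to infinity; hence $|\tilde h^m(\tilde q)-\tilde h^m(\tilde p)|\to\infty$. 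But $\tilde h^m(\tilde p),\tilde h^m(\tilde q)\in\tilde h^m(\tilde U)\subset\tilde U+v_m$, so this distance is at most $R$ for all $m$ — a contradiction. Therefore $U$ contains an essential simple closed curve.

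I expect the crux of the argument — and the step most in need of care — to be the interface between the two structures: one must know that the hypothesis ``$\fix(f)$ is fully essential'' keeps the diameters of the iterates $\tilde h^m(\tilde U)$ in the universal cover uniformly bounded (this is exactly the inessential-lift fact), while the expansion of $h$ along unstable leaves, read through the closed $1$-form representing $\hat\nu^s$, forces those diameters to blow up. A secondary technical point is the possibility that $h$ reverses the transverse orientation of ${\cal F}^s$, which is handled by working with $|\hat\nu^s|$ rather than $\hat\nu^s$; and one should also note the elementary fact that an open connected subset of $\T^2$ carrying a nontrivial loop already contains an essential simple closed curve.
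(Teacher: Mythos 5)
Your strategy is genuinely different from the paper's, and the expansion mechanism in your final paragraph is sound and close in spirit to Lemmas \ref{racd} and \ref{bounded}. The problem is exactly the step you single out as the crux: it is \emph{not} true that an inessential open connected subset $U$ of $\T^2$ has bounded connected components in the universal cover. Inessentiality gives that each component $\tilde U$ of $\pi^{-1}(U)$ maps homeomorphically onto $U$ and that the components are the integer translates of $\tilde U$, but it says nothing about their diameter. For a counterexample, project the injectively immersed ray $t\mapsto (t,e^{-t})$, $t>0$, to $\T^2$ and thicken it to a thin embedded open disk $U$: it is simply connected (hence inessential), yet the component of $\pi^{-1}(U)$ containing the lifted ray is unbounded. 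This is precisely why Koropecki and Tal distinguish between \emph{bounded} and \emph{unbounded} inessential sets, so the reference you give does not supply the fact you need. Without it, the inequality $\mathrm{diam}(\tilde h^m(\tilde U))\leq R$ --- which is what is supposed to collide with the $\lambda^m$ growth of $|\hat\nu^s|$ along lifted unstable segments --- has no justification, and the contradiction evaporates.

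To repair the argument you would need to show that this particular $U=\T^2\setminus\fix(f)$ (open, connected, with $h(U)\subset U$, assumed inessential) is moreover a \emph{bounded} inessential set, or otherwise produce two points on a common unstable leaf in $U$ whose $\tilde h$-orbits stay a bounded distance apart in $\R^2$; nothing in your write-up does this, and it is not automatic. For comparison, the paper avoids the universal cover altogether: it takes $x\in\per(h)$ in $\T^2\setminus\fix(f)$, follows the stable leaf of $x$ until it returns to a small unstable segment through $x$, closes up inside a flow box to obtain a loop transverse to ${\cal F}^u$ and contained in $\T^2\setminus\fix(f)$, and concludes that this loop is essential because a null-homotopic closed transversal to a nonsingular foliation of the torus cannot exist. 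Your reduction from ``no essential simple closed curve'' to ``$U$ is inessential,'' and the computation showing that $|\hat\nu^s(\tilde h^m(\tilde l))|=\lambda^m\nu^s(l)$ forces unbounded displacement (via the nonzero cohomology class of $\hat\nu^s$), are both fine; only the boundedness claim is the gap, but it is the load-bearing one.
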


\begin{proof}  Let $D = T^2 \backslash \fix (f)$.  Then, the previous lemma implies that $D$ is connected and $h^{-1}$- invariant. By Lemma \ref{pap} item 4., there exists $x\in D\cap \per (h)$. 
Take a segment $\delta \subset W^u_ h (x)\cap D$ through $x$ and a flow box of the unstable foliation $U\subset D$.  Follow the segment of stable leaf from $x$ until it hits $\delta$ again.  Note 
that this whole
segment is contained in $D$ and that is transverse to  ${\cal F}^u$.  We can modify this segment slightly, just inside $U$, to obtain a loop that is transverse 
to ${\cal F}^u$.  As the foliations
of an Anosov toral homeomorphism are non-singular, it follows that this loop is homotopically non trivial. This loop is the desired essential closed curve.

\end{proof}

\begin{lema}\label{racd}  If $\{\tilde f ^{n^m}(z): m\in \Z\}$ is not bounded, then for any nonzero $v\in \Z ^2$ $|<\tilde f ^{n^m} (z)-z, v>|$ is not bounded.

\end{lema}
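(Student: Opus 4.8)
<br>

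The plan is to extract the Euclidean displacement $D_m:=\tilde f^{n^m}(z)-z$ from the recursions (\ref{ecu1}) and (\ref{ecu2}) by evaluating them against the closed $1$-forms $\hat\nu^s$ and $\hat\nu^u$. Since $h$ is Anosov on $\T^2$ its invariant foliations are non-singular and orientable, so the signed transverse measures $\hat\nu^s,\hat\nu^u$ are defined; let $[\hat\nu^s],[\hat\nu^u]\in H^1(\T^2;\R)\cong\R^2$ be their cohomology classes and fix primitives $S,U:\R^2\to\R$ of the lifted forms, normalized so that $S-\langle[\hat\nu^s],\cdot\rangle$ and $U-\langle[\hat\nu^u],\cdot\rangle$ are $\Z^2$-periodic (hence bounded). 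Let $x\in\T^2$ be the point covered by $z$. The lift of the trajectory $\gamma^{n^m}_x$ starting at $z$ ends at $\tilde f^{n^m}(z)$ (by the normalization of the isotopy, cf. Corollary \ref{arcsh}), so $\hat\nu^s(\gamma^{n^m}_x)=S(\tilde f^{n^m}(z))-S(z)$ and likewise for $\hat\nu^u$; combining this with (\ref{ecu1}) and (\ref{ecu2}) gives
$$S(\tilde f^{n^m}(z))-S(z)=\lambda^{m}\varphi_s(h^{-m}(x)),\qquad U(\tilde f^{n^m}(z))-U(z)=\lambda^{-m}\varphi_u(h^{-m}(x)).$$

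Since $\varphi_u$ is bounded, the second identity forces $U(\tilde f^{n^m}(z))-U(z)\to 0$ as $m\to\infty$; and using the periodicity of the two corrections the identities become $\langle[\hat\nu^s],D_m\rangle=\lambda^{m}\varphi_s(h^{-m}(x))+O(1)$ and $\langle[\hat\nu^u],D_m\rangle=O(1)$. Now $\{[\hat\nu^s],[\hat\nu^u]\}$ is a basis of $H^1(\T^2;\R)$, because the two foliations being transverse and non-singular makes $\hat\nu^s\wedge\hat\nu^u$ a nowhere-vanishing area form, so $[\hat\nu^s]\cup[\hat\nu^u]\neq 0$. Letting $\{w_1,w_2\}\subset\R^2$ be the basis dual to $\{[\hat\nu^s],[\hat\nu^u]\}$ for the Euclidean product, one has $D_m=\langle[\hat\nu^s],D_m\rangle\,w_1+\langle[\hat\nu^u],D_m\rangle\,w_2$, and therefore
$$D_m=\big(\lambda^{m}\varphi_s(h^{-m}(x))\big)\,w_1+O(1).$$

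If $\{\tilde f^{n^m}(z)\}$ is unbounded then $\{D_m\}$ is unbounded, hence $\{\lambda^{m}\varphi_s(h^{-m}(x))\}$ is unbounded (as $w_1\neq 0$ and the error term is bounded). For a nonzero $v\in\Z^2$ we then have $\langle D_m,v\rangle=\lambda^{m}\varphi_s(h^{-m}(x))\langle w_1,v\rangle+O(1)$, so it is enough that $\langle w_1,v\rangle\neq 0$. As $w_1\perp[\hat\nu^u]$, this means $[\hat\nu^u]$ is not a scalar multiple of $v$; indeed $[\hat\nu^u]$ cannot be parallel to any nonzero integer vector, since (\ref{eq2}) for $\hat\nu^u$ gives $h^{*}[\hat\nu^u]=\lambda^{-1}[\hat\nu^u]$ with $h^{*}\in GL(2,\Z)$ the map induced on $H^1(\T^2;\Z)$, so a primitive integer vector proportional to $[\hat\nu^u]$ would be an eigenvector of $h^{*}$ of eigenvalue $\lambda^{-1}$, forcing $\lambda^{-1}\in\Z$ --- absurd as $\lambda>1$. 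Hence $|\langle\tilde f^{n^m}(z)-z,v\rangle|$ is unbounded, as claimed. I expect the main obstacle to be the bookkeeping of the first two paragraphs (faithfully passing from the recursions for $\varphi_s,\varphi_u$ to the displacement formula via lifts, primitives and periodic corrections), together with the standard but essential input that the transverse cohomology classes of an Anosov torus homeomorphism are linearly independent and of irrational slope.
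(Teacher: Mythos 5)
Your proof is correct and follows essentially the same route as the paper's (much terser) argument: both deduce from $\nu^u(\gamma_x^{n^m})\to 0$ that an unbounded displacement must be asymptotic to the unstable direction, which has irrational slope. Your cohomological bookkeeping with $[\hat\nu^s],[\hat\nu^u]$ and the eigenvalue argument showing $[\hat\nu^u]$ is not proportional to an integer vector are precisely the details the paper leaves implicit in its one-line proof.
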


\begin{proof}  Note that $$\lim _{m\to \infty} \nu ^u (\gamma_x ^{n^m})=   \lim _{m\to \infty} \lambda ^{-m} \nu ^u (\gamma _{h^{-m}(x)}) = 0$$\noindent (see Corollary
\ref{arcs}).  So,
if $\{\tilde f ^{n^m}(z): m\in \Z\}$ is not bounded it has an irrational asymptotical direction (that of the unstable eigenvector of $\tilde h$).

\end{proof}

\begin{lema}\label{bounded} If $\{\tilde f ^{n^m}(z): m\in \Z\}$ is bounded for all $z\in \R ^2$, then $\nu ^s(\gamma _ x) \equiv 0$.

\end{lema}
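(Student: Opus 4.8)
The plan is to reduce the statement to a decay estimate for the \emph{signed} stable measure $\hat\nu^s$, exploiting the fact that for an Anosov homeomorphism of the torus the signed and unsigned transverse measures essentially coincide. First I would record the input coming from the Anosov hypothesis on $\mathbb{T}^2$: $h$ is topologically conjugate to a linear Anosov automorphism, so in particular its stable foliation $\mathcal F^s$ is non-singular (as already used in the proof of Lemma \ref{fixess}) and homeomorphic to a linear foliation, and $\hat\nu^s$ is given by integrating a closed $1$-form $\omega_s$ on $\mathbb{T}^2$. From non-singularity I would derive the key identity $\nu^s(\gamma)=|\hat\nu^s(\gamma)|$ for \emph{every} arc $\gamma$ in $\mathbb{T}^2$: lifting $\gamma$ to $\mathbb{R}^2$ and straightening it rel endpoints (which is possible since $\mathbb{R}^2$ is simply connected, and the straightened arc is either contained in a leaf or monotone transverse to $\mathcal F^s$), one sees $\nu^s(\gamma)\le |\hat\nu^s(\gamma)|$, while $\nu^s\ge |\hat\nu^s|$ always holds. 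Finally I would lift $\omega_s$ to $\mathbb{R}^2$, where it is exact, obtaining a continuous $\Phi_s:\mathbb{R}^2\to\mathbb{R}$ with $\tilde\omega_s=d\Phi_s$, so that $\hat\nu^s(\delta)=\Phi_s(\tilde\delta(1))-\Phi_s(\tilde\delta(0))$ for any arc $\delta$ and any lift.

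Next, fix $x\in\mathbb{T}^2$ and a lift $\tilde x$. By Corollary \ref{arcsh}, the arc $\gamma_x^{n^m}$ lifts (starting at $\tilde x$) to an arc ending at $\tilde f^{n^m}(\tilde x)$, where $\tilde f$ is the irrotational lift; hence $\hat\nu^s(\gamma_x^{n^m})=\Phi_s(\tilde f^{n^m}(\tilde x))-\Phi_s(\tilde x)$. By hypothesis $\{\tilde f^{n^m}(\tilde x):m\ge 0\}$ is bounded, and $\Phi_s$ is continuous, so $|\hat\nu^s(\gamma_x^{n^m})|\le K(x)$ for all $m$. Combining this with the identity $\nu^s=|\hat\nu^s|$ and with Corollary \ref{arcs}, I get $\nu^s(\gamma_{h^{-m}(x)})=\lambda^{-m}\nu^s(\gamma_x^{n^m})=\lambda^{-m}|\hat\nu^s(\gamma_x^{n^m})|\le \lambda^{-m}K(x)\to 0$ as $m\to\infty$. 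Thus the continuous function $\psi:=\nu^s(\gamma_{\cdot})$ (continuous by Lemma \ref{pap} item 6) satisfies $\psi(h^{-m}(x))\to 0$ for every $x$.

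To conclude $\psi\equiv 0$ I would use the abundance of periodic points of $h$: if $z_0$ is $h$-periodic of period $p$, then $h^{-kp}(z_0)=z_0$ for all $k$, so $\psi(z_0)=\lim_{k\to\infty}\psi(h^{-kp}(z_0))=0$. Since the periodic points of $h$ are dense (Lemma \ref{pap} item 4) and $\psi$ is continuous, $\psi\equiv 0$, i.e.\ $\nu^s(\gamma_x)=0$ for all $x$.

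The main obstacle — and the only spot where the specific torus-Anosov setting is really used beyond what was already available — is the identity $\nu^s(\gamma)=|\hat\nu^s(\gamma)|$. For genuine (pseudo)-Anosov maps with singular foliations this fails (an arc winding near a prong can carry positive $\nu^s$ while $\hat\nu^s$ vanishes), which is precisely why the earlier sections needed the more delicate arguments; here, non-singularity of $\mathcal F^s$ on $\mathbb{T}^2$ is exactly what lets one turn the boundedness of the $\tilde f^{n^m}$-orbits — information that a priori only controls the cancellative quantity $\hat\nu^s$ — into honest decay of $\nu^s(\gamma_{h^{-m}(x)})$. I would therefore spend most of the care on justifying that identity cleanly.
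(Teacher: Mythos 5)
Your proof is correct, but it follows a genuinely different route from the one in the paper. The paper argues dynamically at periodic points of $h$: for $x\in\per^m(h)$ it takes the lift $\Phi=\widetilde{h^m}$ fixing a lift $\tilde x$ of $x$, uses $\Phi\tilde f\Phi^{-1}=\tilde f^{\,n^m}$ (Lemmas \ref{up} and \ref{bsp}) to write $\Phi^k\tilde f(\tilde x)=\tilde f^{\,(n^m)^k}(\tilde x)$, and concludes from boundedness of this orbit that $\tilde f(\tilde x)$ lies on the stable manifold $W^s_\Phi(\tilde x)$; hence $f(x)\in W^s_h(x)$, the arc $\gamma_x$ is homotopic rel endpoints to a stable leaf segment, and $\nu^s(\gamma_x)=0$ on $\per(h)$, which is dense. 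You instead run a cohomological argument: you represent $\hat\nu^s$ by a closed $1$-form, lift it to an exact form on $\R^2$, bound $|\hat\nu^s(\gamma_x^{n^m})|$ by the oscillation of the primitive over the bounded orbit, and convert this into decay of $\nu^s(\gamma_{h^{-m}(x)})$ via Corollary \ref{arcs} together with the identity $\nu^s=|\hat\nu^s|$; both proofs then finish identically with density of $\per(h)$ and continuity. You are right that the identity $\nu^s(\gamma)=|\hat\nu^s(\gamma)|$ is the crux of your version and is where non-singularity of ${\cal F}^s$ on $\T^2$ enters (the lifted foliation is a product foliation of $\R^2$, so every arc straightens rel endpoints to a leaf segment or a monotone transverse arc); your justification of it is sound, and the needed facts (orientability of the Anosov foliations, $\hat\nu$ given by a closed $1$-form, homotopy invariance) are all supplied by the paper. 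What the paper's argument buys is economy -- it never needs the signed measure or the straightening lemma, only the characterization of stable sets of a hyperbolic lift by bounded forward orbits -- and it yields the sharper geometric fact $f(x)\in W^s_h(x)$ at periodic points. What your argument buys is a quantitative decay estimate $\nu^s(\gamma_{h^{-m}(x)})\le\lambda^{-m}K(x)$ valid at every point, with the restriction to periodic points used only to beat the non-uniformity of $K$; it also isolates cleanly the one place where the torus--Anosov hypothesis is indispensable.
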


\begin{proof} Let $x\in \per^m(h)$, and let $ \widetilde {h ^m}$ be the lift of $h^m$ fixing a lift $\tilde x$ of $x$.  Let $\Phi := \widetilde {h ^m}$, so that $\Phi (\tilde x ) = \tilde x$.  Lemma \ref{up} gives us
$\Phi \tilde f \Phi ^{-1} = \tilde f ^{n^m}$.  Note that
$$\Phi^k \tilde f (\tilde x) = \Phi^k \tilde f \Phi^{-k} (\tilde x) = \tilde f^{(n^m)^k}, $$\noindent and that we are assuming that $\{\tilde f ^{{n^m}^k} (\tilde x): k\in \Z\}$ is bounded.  This implies that $\tilde f (x)\in W^s_{\Phi} (\tilde x)$.  Furthermore,
$W^s_{\Phi} (\tilde x)$ projects to $W^s_{h^m} (x) \subset W ^s _h (x)$.  So, $f(x)\in  W^s _h (x) $ and $\nu ^s (\gamma _ x)= 0$ over the set $\per (h)$.  As this set is dense (Lemma \ref{pap} item 4.), one obtains $\nu ^s(\gamma _ x) \equiv 0$.

\end{proof}

We are now ready to prove Theorem \ref{toro}:

\begin{proof} Recall that Proposition \ref{gl} states that $f$ is irrotational.  We may now apply Theorem \ref{kt}.  Lemma \ref{fixess} states that item 1 does not hold.  If item 2 holds, then Lemma \ref{bounded} tells us that
$\nu ^s(\gamma _ x) \equiv 0$.  Then, $f = \id$ by Corollary \ref{pimba}.  If item 3 holds Lemma \ref{racd} tells us that $\{\tilde f ^{n^m}(z): m\in \Z\}$ is bounded for all $z\in \R ^2$, and we are done by re-applying
Lemma \ref{bounded}.

\end{proof}

\end{section}

\end{document}